\numberwithin{equation}{section}
\newtheoremstyle{theorem}{3pt}{3pt}%
{\it}
{}
{\bfseries}
{:}
{.5em}
{}
\theoremstyle{theorem}
\newtheorem{theorem}{Theorem}[section]
\newtheorem{proposition}[theorem]{Proposition}
\newtheorem{corollary}[theorem]{Corollary}
\newtheorem{lemma}[theorem]{Lemma}
\newtheorem{definition}[theorem]{Definition}
\newtheoremstyle{example}{3pt}{3pt}%
{}
{}
{\sc}
{:}
{.5em}
{}
\theoremstyle{example}
\newtheorem{example}[theorem]{Example}
\newtheoremstyle{remark}{3pt}{3pt}%
{}
{}
{\sc}
{:}
{.5em}
{}
\theoremstyle{remark}
\newtheorem{remark}{Remark}[section]
\numberwithin{equation}{section}
\newcommand{\thismonth}{\ifcase\month\or
  January\or February\or March\or April\or May\or June\or
  July\or August\or September\or October\or November\or December\fi
  \space\number\year}
\newcommand{\low}{\@ifnextchar^{}{^{\vphantom x}}}
\newcommand{\high}{\@ifnextchar_{}{_{\vphantom I}}}
\DeclareSymbolFont{script}{U}{eus}{m}{n}
\DeclareSymbolFontAlphabet{\mathscr}{script}
\DeclareMathSymbol{\EuWedge}{0}{script}{"5E}
\DeclareMathAlphabet{\mathrmsl}{OT1}{cmr}{m}{sl}
\newcommand{\rssymb}[2]{\newcommand{#1}{{\mathrmsl{#2}}}}
\newcommand{\calsymb}[2]{\newcommand{#1}{{\mathcal{#2}}}}
\newcommand{\bbsymb}[2]{\newcommand{#1}{{\mathbb{#2}}}}
\newcommand{\lieoper}[2]{\newcommand{#1}{\mathop
  {\mathfrak{#2}\null}\nolimits}}
\newcommand{\oper}[3][n]{\newcommand{#2}{\mathop
  {\mathrm{#3}\null}\ifx n#1\nolimits\else\limits\fi}}
\newcommand{\rsoper}[3][n]{\newcommand{#2}{\mathop
  {\mathrmsl{#3}\null}\ifx n#1\nolimits\else\limits\fi}}
\bbsymb\C{C} \bbsymb\F{F} \bbsymb\HQ{H}\bbsymb\N{N} \bbsymb\Q{Q}
\bbsymb\R{R} \bbsymb\U{U} \bbsymb\V{V} \bbsymb\W{W} \bbsymb\Z{Z}
\bbsymb\bbf{F} \bbsymb\bbk{K} \bbsymb\bbi{I} \bbsymb\bbl{L} \bbsymb\bbo{O}
\bbsymb\bbj{J}
\bbsymb\bby{Y}
\bbsymb\bbp{P}
\bbsymb\bba{A}
\calsymb\cA{A} \calsymb\cB{B} \calsymb\cC{C} \calsymb\cD{D} \calsymb\cE{E}
\calsymb\cF{F} \calsymb\cG{G} \calsymb\cH{H} \calsymb\cI{I} \calsymb\cJ{J}
\calsymb\cK{K} \calsymb\cL{L} \calsymb\cM{M} \calsymb\cN{N} \calsymb\cO{O}
\calsymb\cP{P} \calsymb\cQ{Q} \calsymb\cR{R} \calsymb\cS{S} \calsymb\cT{T}
\calsymb\cU{U} \calsymb\cV{V} \calsymb\cW{W} \calsymb\cX{X} \calsymb\cY{Y}
\calsymb\cZ{Z}
\oper\End{End} \oper\Hom{Hom}                    
\oper\Sym{Sym} \oper\Skew{Skew}
\oper\Aut{Aut}                                   
\oper\GL{GL} \oper\SL{SL}\oper\Symp{Sp}
\oper\CO{CO} \oper\On{O} \oper\SO{SO} \oper\Pin{Pin} \oper\Spin{Spin}
\oper\CU{CU} \oper\Un{U} \oper\SU{SU} \oper\PSU{PSU}
\rsoper\Diff{Diff} \rsoper\SDiff{SDiff}
\lieoper\der{der}                                
\lieoper\gl{gl} \lieoper\sgl{sl}\lieoper\symp{sp}
\lieoper\co{co} \lieoper\so{so} \lieoper\spin{spin}
\lieoper\cu{cu} \lieoper\un{u}  \lieoper\su{su}
\rsoper\Vect{Vect} \rsoper\Ham{Ham}
\def\la#1{\hbox to #1pc{\leftarrowfill}}
\def\ra#1{\hbox to #1pc{\rightarrowfill}}
\newcommand{\norm}[2][]{|\mkern-2mu|#2|\mkern-2mu|
  _{\lower1pt\hbox{${}_{#1}$}}}
\newcommand{\Norm}[2][]{\bigl|\mkern-3mu\bigr|#2\bigr|\mkern-3mu\bigr|
  _{\lower1pt\hbox{${}_{#1}$}}}
\newcommand{\del}{\partial}                 
\rsoper\dimn{dim}                           
\rsoper\grad{grad}                          
\rsoper\kernel{ker}\rsoper\image{im}        
\rsoper\alt{alt}   \rsoper\sym{sym}         
\rsoper\Ad{Ad}     \rsoper\ad{ad}           
\rsoper\CoAd{CoAd} \rsoper\coad{coad}       
\rsoper\trace{tr}  \rsoper\trfree{tf}       
\rsoper\detm{det}                           
\rsoper\Vol{Vol}                            
\rsoper\divg{div}                           
\rsoper\sign{sign}                          
\rssymb\iden{id}                            
\rssymb\vol{vol}                            
\oper\Imag{Im}\oper\Real{Re}                
\newcommand{\sd}{{\raise1pt\hbox{$\scriptscriptstyle +$}}}
\newcommand{\asd}{{\raise1pt\hbox{$\scriptscriptstyle -$}}}
\newcommand{\sdasd}{{\raise1pt\hbox{$\scriptscriptstyle\pm$}}}
\newcommand{\asdsd}{{\raise1pt\hbox{$\scriptscriptstyle\mp$}}}
\rsoper\scal{scal}
\def\kahl/{k\"ahler}
\def\Kahl/{K{\"a}hler}
\begin{document}
\title[Generalized CoK\"ahler and Generalized K\"ahler Geometry]{Generalized Cok\"ahler Geometry and an Application to Generalized K\"ahler Structures}
\author{Ralph R. Gomez}
\address{ Department of Mathematics and Statistics\\
		Swarthmore College \\
		Swarthmore, PA 19081}
		
\author{Janet Talvacchia}
\address{ Department of Mathematics and Statistics\\
		Swarthmore College \\
		Swarthmore, PA 19081}

\date{\today}

\begin{abstract}
In this paper, we propose a generalization of classical coK\"ahler geometry from the point of view of generalized contact metric geometry. This allows us to generalize a theorem of M.Capursi and I.Goldberg (\cite{[C]},\cite{Gol}) and show that the product $M_{1}\times M_{2}$ of generalized contact metric manifolds $(M_i, \Phi_i,E_{\pm,i}, G_i)$, $ i=1, 2$,
where $M_{1}\times M_{2}$ is endowed with the product (twisted) generalized complex structure induced from $\Phi_1$ and $\Phi_2$, is (twisted) generalized K\"ahler if and only if $(M_i, \Phi_i, E_{\pm,i}, G_i) ,\  \  i=1,2$ are (twisted) generalized coK\"ahler structures.
As an application of our theorem we construct new examples of twisted generalized K\"ahler structures on manifolds that do not admit a classical K\"ahler structure and we give examples of twisted generalized coK\"ahler structures on manifolds which do not admit a classical coK\"ahler structure.
\end{abstract}

\maketitle

\section{Introduction}\label{S:intro}
\indent Consider the product of two manifolds $M_{1}\times M_{2}$ where each manifold has an almost contact structure $(\phi_i,\xi_i,\eta_i)$ on $M_i$, $\phi_i$ is an endomorphism of $TM_i$, $\xi_i$ is a vector field and $\eta_i$ is a 1-form such that $\eta_{i}(\xi_{i})=1$, $i=1,2$. A natural question to ask is what further conditions are needed
on $M_{i}$ to ensure the product $M_{1}\times M_{2}$ is a complex manifold. Morimoto answered this question in \cite{[M]}. He constructed a natural almost complex structure $J$ on the product given by
\begin{equation}
J(X,Y)=(\phi_{1}(X)-\eta_{2}(Y)\xi_{1},\phi_{2}(Y)+\eta_{1}(X)\xi_{2})
\end{equation}
where $X\in TM_{1}$, $Y\in TM_{2}$, and showed that $J$ is integrable if and only if each factor manifold $M_{i}$ is normal almost contact. An interesting corollary of this is that the product of two odd-dimensional spheres is complex. One can further ask under what conditions on $M_{i}$ the product manifold is K\"ahler. It is tempting to speculate that the answer should be that each factor manifold $M_{i}$ is Sasakian. But that cannot be the case since this would mean the product of two Sasakian spheres would be K\"ahler violating Calabi and Eckmann's result that the product of odd dimensional spheres is complex and nonK\"ahler \cite{Cala}.
It was Goldberg \cite{Gol} and later Capursi \cite{[C]} who realized the right geometric structure on the factor manifolds $M_{i}$ should be a coK\"ahler structure. A coK\"ahler manifold is a normal almost contact metric manifold $(\phi,\xi,\eta,g)$ where $g$ is a Riemannian metric compatible with the other structure tensors such that both the one-form $\eta$ is closed and the fundamental two-form $\Omega=g(X,\phi Y)$ is closed, for any sections $X,Y$ of $TM$. (See section 2 for precise definitions.)
We state the theorem of Capursi and Goldberg here since the principal aim of this paper is to generalize their theorem:
\begin{theorem}[Capursi,Goldberg]
Let $(M_{i},\phi_{i},\xi_{i},\eta_{i},g_{i})$, $i=1,2$, be almost contact metric manifolds and let $J$ be defined as above. The manifold $(M_{1}\times M_{2},J,G)$ is K\"ahler if and only if $(M_{i},\phi_{i},\xi_{i},\eta_{i},g_{i})$ is coK\"ahler, where $G=g_{1}+g_{2}.$
\end{theorem}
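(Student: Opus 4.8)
The plan is to check the three defining properties of a \Kahl/ structure on $(M_1\times M_2,J,G)$ — integrability of $J$, compatibility of $G$ with $J$, and closedness of the fundamental two-form — and to match each against the ingredients of the coK\"ahler condition (normality, metric compatibility, $d\eta_i=0$, $d\Omega_i=0$). The integrability half is already available: by Morimoto's theorem quoted above, $J$ is integrable if and only if each $(M_i,\phi_i,\xi_i,\eta_i)$ is normal. So the ``normal'' part of ``coK\"ahler'' accounts exactly for the complex-structure part of ``\kahl/'', and nothing further is needed there.

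Next I would observe that $G=g_1+g_2$ is automatically Hermitian with respect to $J$. This is a direct tensorial computation using only the almost contact metric relations $g_i(\phi_iX,\phi_iY)=g_i(X,Y)-\eta_i(X)\eta_i(Y)$, $\phi_i\xi_i=0$, $g_i(X,\xi_i)=\eta_i(X)$, and the skew-symmetry $g_i(\phi_iX,Y)=-g_i(X,\phi_iY)$: writing $X=(X_1,X_2)$ and expanding $G(JX,JX)$, the terms proportional to $\eta_1(X_1)^2$ and $\eta_2(X_2)^2$ cancel, leaving $G(JX,JX)=G(X,X)$. Hence compatibility of $G$ with $J$ imposes no condition on the factors.

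The substantive step is the fundamental two-form $\Omega_G(X,Y):=G(X,JY)$. Writing $\Omega_i(X,Y)=g_i(X,\phi_iY)$ for the fundamental two-form of $M_i$ and unwinding the definition of $J$, one gets $\Omega_G=\Omega_1+\Omega_2-\eta_1\wedge\eta_2$ (all pulled back to the product in the evident way; the precise sign of the last term depends only on conventions). Differentiating, $d\Omega_G=d\Omega_1+d\Omega_2-d\eta_1\wedge\eta_2+\eta_1\wedge d\eta_2$. Now decompose three-forms on $M_1\times M_2$ by bidegree $(p,q)$ with respect to the two factors: $d\Omega_1$ has type $(3,0)$, $d\Omega_2$ type $(0,3)$, $d\eta_1\wedge\eta_2$ type $(2,1)$, and $\eta_1\wedge d\eta_2$ type $(1,2)$. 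Since these lie in distinct summands of $\Lambda^3 T^*(M_1\times M_2)$, the equation $d\Omega_G=0$ holds if and only if all four pieces vanish; and because $\eta_2$ is nowhere zero (as $\eta_2(\xi_2)=1$), the vanishing of $d\eta_1\wedge\eta_2$ forces $d\eta_1=0$, and similarly $d\eta_2=0$, while the remaining two conditions read $d\Omega_1=0$ and $d\Omega_2=0$. Combined with normality and the fact that a Hermitian manifold with integrable $J$ is \kahl/ iff its fundamental form is closed, this is exactly the statement that each $(M_i,\phi_i,\xi_i,\eta_i,g_i)$ is coK\"ahler, and every implication above is reversible.

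The only place requiring care is the bidegree decomposition together with the sign bookkeeping in $\Omega_G$: one must be certain that the four components of $d\Omega_G$ are genuinely independent as sections of $\Lambda^3 T^*(M_1\times M_2)$, so that no cancellation between, say, $d\Omega_1$ and $-d\eta_1\wedge\eta_2$ is possible, and that ``$d\eta_1\wedge\eta_2=0$ on the product'' really does force ``$d\eta_1=0$ on $M_1$'', which uses nonvanishing of $\eta_2$. Everything else is routine computation with the almost contact metric identities.
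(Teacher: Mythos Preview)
Your proof is correct and is the standard classical argument. However, the paper does not actually supply a proof of this theorem: it is quoted from the cited works of Capursi and Goldberg as motivation, and the paper's contribution is the generalized version (Theorem~1.3). So there is no ``paper's own proof'' to compare against here.

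For what it is worth, the paper's proof of the generalized statement (Theorem~1.3) proceeds rather differently in spirit from your direct fundamental-form computation. There one works with the $\sqrt{-1}$-eigenbundles of $\mathcal J_1$ and $\mathcal J_2 = G\mathcal J_1$ on $M_1\times M_2$, writes down explicit generators, and checks closure under the (twisted) Courant bracket; the backward direction is obtained by invoking Theorem~4.1 (the generalized Morimoto theorem) for both $\mathcal J_1$ and $\mathcal J_2$. In the classical case this would correspond to checking integrability of both $J$ and $\omega^{-1}g$ rather than computing $d\Omega_G$ directly. Your approach via the explicit formula $\Omega_G = \Omega_1 + \Omega_2 - \eta_1\wedge\eta_2$ and the bidegree splitting of $d\Omega_G$ is more elementary for the classical statement, since it avoids any eigenbundle bookkeeping and makes the equivalence with the four conditions $d\Omega_i=0$, $d\eta_i=0$ completely transparent; the eigenbundle approach, on the other hand, is what survives intact in the generalized setting where there is no fundamental two-form to differentiate.
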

\indent The beginnings of coK\"ahler geometry started in its geometric role in time-dependant mechanics (see for example \cite{Barb}) and since then the subject has grown in spurts. Various results have emerged over the years, elucidating both the differential geometry and the topology of the manifolds admitting such structures. The reader should refer to \cite{[CDY]} for a recent and comprehensive survey of coK\"ahler geometry, and more generally, cosymplectic geometry.\\
\indent The notion of a generalized complex structure and its twisted counterparts, introduced by Hitchin in his paper \cite{[H]} and developed by Gualtieri (\cite{[G1]},\cite{[G2]}) is a framework that unifies both complex and symplectic structures.  These structures exist only on even dimensional manifolds. The odd dimensional analog of this structure, a generalized contact structure, was taken up by Vaisman (\cite {[V1]},\cite{[V2]}),  Poon, Wade \cite{[PW]}, and Sekiya \cite{[S]}. This framework unifies almost contact, contact, and cosymplectic structures.  Generalized K\"ahler structures were introduced by Gualtieri \cite{[G1]},\cite{[G2]},\cite{[G3]} and have already found their way into the physics literature (\cite{[Hu]}, \cite{[LMTZ]}, \cite{[LRUZ]}, \cite{[Gr]}).\\
\indent In order to prove a generalized contact version of Theorem 1.1, the first step is to reformulate Morimoto's theorem
discussed above in the generalized contact setting. This step was accomplished by the authors in \cite{[GT]}. Consider the generalized almost contact structure $(\Phi_{i},E_{\pm,i})$ where $\Phi_{i}$ is an endomorphism of $TM_{i}\oplus T^{*}M_{i}$ and
$E_{\pm,i}$ are sections of $TM_{i}\oplus T^{*}M_{i}$, $i=1,2$, such that the conditions given in (2.3)-(2.5) are satisfied. It was shown by the authors that one can
generalize equation (1.1) and this generalized almost complex structure
on $M_{1}\times M_{2}$ is given by:
\begin{align}
\mathcal J (X_1 +\alpha_1,X_2 +\alpha_2 ) = &( \Phi_1(X_1+\alpha_1) - 2\langle E_{+,2},X_2 +\alpha_2 \rangle E_{+,1} - 2\langle E_{-,2}, X_2 +\alpha_2 \rangle E_{-,1}, \cr
 &\Phi_2(X_2 +\alpha_2)  + 2\langle E_{+,1}, X_1 +\alpha_1 \rangle E_{+,2} + 2\langle E_{-,1}, X_1 +\alpha_1 \rangle E_{-,2} )
\end{align} for any sections $X_{i}+\alpha_{i}$ of $TM_{i}\oplus T^{*}M_{i}$.
This formula was then used to give a proof of a generalization almost contact version of Morimoto's theorem mentioned above.
\begin{theorem}\cite{[GT]}
Let $M_1$ and $M_2$ be odd dimensional smooth manifolds each with generalized almost contact structures $(\Phi_i, E_{\pm,i})$ $i=1,2$.  Then $M_1\times M_2$ admits a generalized almost complex structure $\mathcal J$. Further $\mathcal J$ is a generalized complex structure if and only if both $(\Phi_i, E_{\pm,i})\  \   i=1,2$ are strong generalized contact structures and $[[E_{\pm, i},E_{\mp ,i}]] = 0$.
\end{theorem}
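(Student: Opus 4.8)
The statement has two essentially independent halves, which I would treat separately. That the formula (1.2) always defines a generalized \emph{almost} complex structure on $M_{1}\times M_{2}$ is a pointwise assertion, and I would dispatch it by checking that $\mathcal{J}^{2}=-\mathrm{Id}$ on $T(M_{1}\times M_{2})\oplus T^{*}(M_{1}\times M_{2})$ and that $\mathcal{J}$ is orthogonal for the canonical neutral pairing. Expanding (1.2), both claims reduce to the defining identities (2.3)--(2.5) of the two factors --- the quadratic relation for each $\Phi_{i}$, the vanishings $\Phi_{i}E_{\pm,i}=0$ and $\langle E_{\pm,i},\Phi_{i}(\cdot)\rangle=0$, and the normalizations of $\langle E_{\pm,i},E_{\pm,i}\rangle$ and $\langle E_{+,i},E_{-,i}\rangle$ --- so this half is a finite (if fiddly) multilinear computation using no derivatives, and in particular $\mathcal{J}$ always exists.

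For integrability I would work with the $(+i)$-eigenbundle $L\subset\bigl(T(M_{1}\times M_{2})\oplus T^{*}(M_{1}\times M_{2})\bigr)\otimes\C$ of $\mathcal{J}$, recalling that $\mathcal{J}$ is a generalized complex structure exactly when $L$ is involutive under the Courant bracket, equivalently when the tensorial Courant--Nijenhuis torsion
\[
\mathcal{N}_{\mathcal{J}}(A,B)=[[\mathcal{J}A,\mathcal{J}B]]-\mathcal{J}[[\mathcal{J}A,B]]-\mathcal{J}[[A,\mathcal{J}B]]-[[A,B]]
\]
vanishes identically. The fact to exploit is that $T(M_{1}\times M_{2})\oplus T^{*}(M_{1}\times M_{2})$ is the direct sum of the pullbacks of the $TM_{i}\oplus T^{*}M_{i}$, and that the Courant bracket of a section pulled back from $M_{1}$ with one pulled back from $M_{2}$ vanishes. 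A short computation from (1.2) then exhibits $L=L_{1}\oplus L_{2}\oplus\ell$, where $L_{i}$ is the pullback of the $(+i)$-eigenbundle of the genuine complex structure that $\Phi_{i}$ induces on the subbundle orthogonal to $E_{+,i}$ and $E_{-,i}$, and $\ell$ is spanned by the two sections of the form $E_{+,1}-iE_{+,2}$ and $E_{-,1}-iE_{-,2}$ (signs up to convention).

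Testing involutivity of $L$ then reduces to a short list of bracket types. Because brackets of $M_{1}$-sections remain $M_{1}$-sections and $L$ meets $\bigl(TM_{1}\oplus T^{*}M_{1}\bigr)\otimes\C$ in exactly $L_{1}$, closure forces $[[L_{1},L_{1}]]\subset L_{1}$ and $[[L_{1},E_{\pm,1}]]\subset L_{1}$, together with their $M_{2}$-analogues; these are exactly the content of $(\Phi_{i},E_{\pm,i})$ being a strong generalized contact structure. Brackets between $L_{1}$ and $L_{2}$, and brackets of $L_{i}$ with the $E_{\pm,j}$ coming from the other factor, vanish automatically. The one remaining type is $[[\,E_{+,1}-iE_{+,2}\,,\ E_{-,1}-iE_{-,2}\,]]$; expanding it and discarding the vanishing cross terms leaves $[[E_{+,1},E_{-,1}]]-[[E_{+,2},E_{-,2}]]$, which is a real section, and for a real section to lie in $L$ --- whose intersection with the real sections of each factor is zero --- one needs $[[E_{+,i},E_{-,i}]]=0$ for $i=1,2$. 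Conversely, granting the strong generalized contact condition on both factors together with $[[E_{+,i},E_{-,i}]]=0$, every bracket type closes, $L$ is involutive, and this yields both directions of the equivalence.

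The main obstacle is not conceptual but the bookkeeping in the brackets that mix the two factors. The correction terms $-2\langle E_{\pm,2},\cdot\rangle E_{\pm,1}$ and $+2\langle E_{\pm,1},\cdot\rangle E_{\pm,2}$ in (1.2) genuinely couple $M_{1}$ and $M_{2}$, so brackets involving them produce derivatives of the pairing functions $\langle E_{\pm,i},X_{i}+\alpha_{i}\rangle$ that have to be regrouped and matched against the $\tfrac12\,d\langle\cdot,\cdot\rangle$ term built into the Courant bracket. Since the Courant bracket is not skew-symmetric, one must carry the exact (or the Dorfman) form of the torsion throughout and not silently discard symmetric pieces. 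Evaluating $\mathcal{N}_{\mathcal{J}}$, which is tensorial, on a local frame adapted to the splitting $TM_{i}\oplus T^{*}M_{i}\supset\langle E_{+,i},E_{-,i}\rangle$ is what keeps this manageable.
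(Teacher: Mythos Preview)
Your proposal is correct and follows essentially the same route as the paper (and the cited source \cite{[GT]}). The paper does not reprove Theorem~1.2 here, but its proof of Theorem~1.3 together with Lemma~4.2 makes the intended argument transparent: the $\sqrt{-1}$--eigenbundle of $\mathcal J$ is generated by $(E^{(1,0)}_{\Phi_1},0)$, $(0,E^{(1,0)}_{\Phi_2})$, $(E_{+,1},-\sqrt{-1}E_{+,2})$, $(E_{-,1},-\sqrt{-1}E_{-,2})$, exactly your $L_1\oplus L_2\oplus\ell$; closure of the first two blocks against each other and against the $E_{\pm,i}$ is, via Lemma~4.2, precisely the strong condition on each $\Phi_i$; and the remaining bracket yields $([[E_{+,1},E_{-,1}]],-[[E_{+,2},E_{-,2}]])$, a real section that can lie in $L$ only if it vanishes. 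One small remark: your closing paragraph overstates the difficulty of the mixed terms---because the Courant bracket on the product splits as in (4.1), the cross-factor coupling in (1.2) is purely algebraic and produces no derivative terms to chase, so the computation is cleaner than you anticipate.
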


\indent In this article, we will first propose a generalization of coK\"ahler geometry using the language of generalized contact metric geometry and then we prove the following generalization of Theorem 1.1:
\begin{theorem}\label{T1}
Let $M_{1}$ and $M_{2}$ be odd dimensional smooth manifolds each with a (twisted) generalized contact metric structure
$(\Phi,E_{\pm,i},G_i),i=1,2$ such that on the product $M_1\times M_2$ are two
(twisted) generalized almost complex structures: $\mathcal{J}_1$ which is the natural generalized almost complex structure induced from $\Phi_1$ and $\Phi_2$ and $\mathcal{J}_2=G\mathcal{J}_1$ where
$G=G_1\times G_2$. Then $(M_1\times M_2,\mathcal{J}_1,\mathcal{J}_2)$ is (twisted) generalized K\"ahler if and
only if $(\Phi_{i},E_{\pm,i},G_i)$, $i=1,2$ are (twisted) generalized coK\"ahler structures.
\end{theorem}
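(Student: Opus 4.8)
The plan is to reduce \thmref{T1} to the generalized Morimoto theorem (the result of \cite{[GT]} stated above as Theorem~1.2) applied twice, after noticing that the purely algebraic half of the definition of a generalized K\"ahler structure is automatic in this situation. Recall that $(M_1\times M_2,\mathcal J_1,\mathcal J_2)$ is (twisted) generalized K\"ahler precisely when $\mathcal J_1$ and $\mathcal J_2$ are commuting (twisted) generalized complex structures and $-\mathcal J_1\mathcal J_2$ is a positive definite generalized metric on $T(M_1\times M_2)\oplus T^{*}(M_1\times M_2)$. Since $\mathcal J_2$ is \emph{defined} to be $G\mathcal J_1$ with $G=G_1\times G_2$, one first observes that $\mathcal J_2$ is automatically orthogonal for the natural pairing (because $G$ and $\mathcal J_1$ are), while $\mathcal J_2^{2}=-\iden$ and $[\mathcal J_1,\mathcal J_2]=0$ are each equivalent to the single identity $[\mathcal J_1,G]=0$; this last relation follows factorwise from the compatibilities $\Phi_iG_i=G_i\Phi_i$ and $G_iE_{\pm,i}=\pm E_{\pm,i}$ built into the hypothesis that $(\Phi_i,E_{\pm,i},G_i)$ is a generalized contact metric structure. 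Granting that, $-\mathcal J_1\mathcal J_2=-\mathcal J_1G\mathcal J_1=-G\mathcal J_1^{2}=G$, which is positive definite because it is the product of the generalized metrics $G_1,G_2$. Hence the entire content of the theorem is the equivalence: \emph{$\mathcal J_1$ and $\mathcal J_2$ are both (twisted) generalized complex if and only if $(\Phi_i,E_{\pm,i},G_i)$ is (twisted) generalized coK\"ahler for $i=1,2$.}

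Second, I would dispose of $\mathcal J_1$ directly: by Theorem~1.2 it is (twisted) generalized complex if and only if each $(\Phi_i,E_{\pm,i})$ is a strong (twisted) generalized contact structure with $[[E_{\pm,i},E_{\mp,i}]]=0$. For $\mathcal J_2$ the key step is to recognize $G\mathcal J_1$ as \emph{itself} a generalized almost complex structure of the shape (1.2): a short computation with the block-diagonal $G=G_1\times G_2$ and $G_iE_{\pm,i}=\pm E_{\pm,i}$ shows that $\mathcal J_2$ is the natural generalized almost complex structure built from the $G$-dual generalized almost contact structures $(\widetilde\Phi_i,\widetilde E_{\pm,i})$, where $\widetilde\Phi_i=G_i\Phi_i$ and $\widetilde E_{\pm,i}$ is $E_{\pm,i}$ up to fixed signs chosen so that the minus/plus pattern of (1.2) is reproduced. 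One then verifies, using $G_i^{2}=\iden$, the $G_i$-invariance of the pairing, and $\Phi_iG_i=G_i\Phi_i$, that $(\widetilde\Phi_i,\widetilde E_{\pm,i})$ indeed satisfies the generalized almost contact axioms (2.3)--(2.5), so that Theorem~1.2 applies a second time: $\mathcal J_2$ is (twisted) generalized complex if and only if each $(\widetilde\Phi_i,\widetilde E_{\pm,i})$ is a strong (twisted) generalized contact structure with $[[\widetilde E_{\pm,i},\widetilde E_{\mp,i}]]=0$.

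Third, I would match these two sets of conditions with the notion introduced in Section~2. By design, a (twisted) generalized coK\"ahler structure is a (twisted) generalized contact metric structure $(\Phi,E_\pm,G)$ for which \emph{both} $(\Phi,E_\pm)$ and its $G$-dual $(\widetilde\Phi,\widetilde E_\pm)$ are strong (twisted) generalized contact structures with the brackets $[[E_{\pm},E_{\mp}]]$ and $[[\widetilde E_{\pm},\widetilde E_{\mp}]]$ vanishing --- the odd-dimensional shadow of ``normal almost contact metric together with $d\eta=0$ and $d\Omega=0$,'' just as integrability of $J$ together with $\nabla J=0$ appears in the classical proof of Theorem~1.1. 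With this definition in hand, the equivalence isolated in the first paragraph is immediate and the theorem follows. (If Section~2 instead adopts a more geometric formulation in terms of closedness of associated forms or parallelism for a suitable connection, an auxiliary lemma identifying that formulation with the two-structures one is required; that lemma carries the only genuinely differential-geometric content and is the analog of the classical characterization of coK\"ahler manifolds.)

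Finally, the twisted case needs no new idea: the twisting $3$-form on $M_1\times M_2$ is the sum $H_1+H_2$ of the pullbacks of the $H_i$, the $H$-twisted Courant bracket on $T(M_1\times M_2)\oplus T^{*}(M_1\times M_2)$ restricts to the $H_i$-twisted Courant bracket on each summand and produces no cross-terms, so every bracket computation above splits factorwise exactly as in the untwisted setting. The step I expect to be the main obstacle is the second one: pinning down the $G$-dual pair $(\widetilde\Phi_i,\widetilde E_{\pm,i})$ --- in particular the correct signs --- and checking it is a bona fide generalized almost contact structure, so that Theorem~1.2 can legitimately be invoked for $\mathcal J_2$; once that is settled and the Section~2 definition is in place, the remainder is bookkeeping.
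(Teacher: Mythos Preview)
Your overall strategy is exactly the one the paper uses: reduce everything to two applications of the generalized Morimoto theorem (Theorem~1.2/4.1), once for $\mathcal J_1$ and once for $\mathcal J_2=G\mathcal J_1$, after noting that the purely algebraic compatibilities (commutation, $\mathcal J_2^2=-\iden$, positivity of $-\mathcal J_1\mathcal J_2=G$) are automatic. The paper carries out the forward direction by explicitly writing the $\sqrt{-1}$-eigenbundle of $\mathcal J_2$ and checking Courant closure on generators, while invoking Theorem~4.1 only for the converse; your more symmetric phrasing --- apply Theorem~1.2 to both $\mathcal J_1$ and $\mathcal J_2$ in both directions --- is a clean equivalent.

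There is, however, a concrete slip that affects precisely the step you flagged as the main obstacle. The identity you invoke, $G_iE_{\pm,i}=\pm E_{\pm,i}$, is false; the compatibility condition~(3.1) forces $G_iE_{\pm,i}=E_{\mp,i}$ (this is Lemma~3.2(i) in the paper). With the wrong sign convention your check that $G$ and $\mathcal J_1$ commute does not go through, and your description of the $G$-dual sections $\widetilde E_{\pm,i}$ as ``$E_{\pm,i}$ up to fixed signs'' is off: the dual generalized almost contact structure is $(G_i\Phi_i,\,E_{\mp,i})$, i.e.\ a \emph{swap} of the sections, not a sign change (Remark~3.1). Once you use the correct identity, the computation of $\mathcal J_2$ gives the paper's formula~(4.3), and one sees that $\mathcal J_2$ is the structure~(1.2) built from $(G_1\Phi_1,E_{\mp,1})$ and $(G_2\Phi_2,E_{\pm,2})$; since the hypotheses of Theorem~1.2 are symmetric in $E_+$ and $E_-$, this asymmetric-looking labeling is harmless, and your intended second application of Theorem~1.2 goes through verbatim. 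Note also that the extra bracket condition $[[\widetilde E_{\pm,i},\widetilde E_{\mp,i}]]=0$ you list is redundant: because $\widetilde E_{\pm,i}=E_{\mp,i}$, it is the same condition as $[[E_{+,i},E_{-,i}]]=0$, which is why the paper's Definition~3.3 only demands normality of $(\Phi,E_\pm,G)$ together with strength of $G\Phi$.
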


\indent In section 2 we gather the basics of coK\"ahler geometry, generalized complex, generalized K\"ahler, and generalized contact geometry. In section 3, we state and prove some basic properties of generalized almost contact metric structures and define the notion of a generalized coK\"ahler structure. Then, in section 4 we prove Theorem 1.3. In section 5, we give numerous examples of generalized coK\"ahler structures. In particular, we are able to construct almost K\"ahler nonK\"ahler manifolds that admit twisted generalized K\"ahler structures and
we are able to construct almost coK\"ahler noncoK\"ahler manifolds that admit twisted generalized coK\"ahler structures.

\section{Preliminaries}\label{B:Back}
In this section we will record some of the fundamental geometric structures needed for the generalization of coK\"ahler geometry. But first, it will be worthwhile to recall the formal definition of a coK\"ahler structure on a manifold.\\
\indent An almost contact metric structure on $M$ is given by the the following structure tensors $(\phi,\xi, \eta, g)$ where $\phi$ is a $(1,1)$ tensor field, $\xi$ is a vector field and $\eta$ is a $1$-form, given by the following conditions

\begin{equation}
\phi^{2}=-I+\eta\otimes\eta, \hspace{.5cm}\eta(\xi)=1
\end{equation}
and where $g$ is a Riemannian metric subject to the following compatibility condition
\begin{center}
$g(\phi X, \phi Y)=g(X,Y)-\eta(X)\eta(Y)$
\end{center}
for an vector fields $X,Y\in \Gamma(TM)$. We can use the Riemannian metric $g$ and $\phi$ to construct the fundamental 2-form
\begin{equation}
\Omega(X,Y)=g(X,\phi Y).
\end{equation}

Finally, recall the Nijenhuis torsion tensor $$N_{I}(X,Y)=[ IX,IY]+I^{2}[X,Y]-I[X, IY]-I[IX,Y]$$ is defined for any $(1,1)$ tensor field $I.$ An almost contact (metric) structure is normal if $N_{\phi}=-2\xi\otimes d\eta$ \cite{[CDY]}. Equivalently, an almost contact (metric) structure on $M$ is normal if
the associated almost complex structure coming from (1.1) on $M\times \mathbb{R}$ is integrable. We are now ready for the definition of a coK\"ahler structure.
\begin{definition}
An almost coK\"ahler manifold is an almost contact metric manifold $(M,\phi,\xi,\eta,g)$ such that the fundamental 2-form $\Omega$
and the 1-form $\eta$ are closed. Furthermore, $M$ is a coK\"ahler manifold if the underlying almost contact structure is normal, that is
$N_{\phi}=-2\xi\otimes d\eta=0.$

\end{definition}

We give here some examples coK\"ahler manifolds which will be useful in section 5.
\begin{example}
On $S^{1}$ we can define $(\phi,\xi,\eta,g)$ where $\phi=0$, $\xi=\del_t$ where $t$ is the coordinate on $S^1$, $\eta=dt$, and
$g=dt\otimes dt.$ It is clear that $S^1$ is almost coK\"ahler. The normality follows at once since $\phi=0$ and $\eta$ is closed.
The manifold $S^1$ together with this coK\"ahler structure will be called the trivial coK\"ahler structure on $S^{1}.$

\end{example}

\begin{example}\cite{[CDY]}
Let $(N,J,g)$ be an almost K\"ahler manifold and form the product, denoted by $M$, with $\mathbb{R}$ (or $S^{1}$). Let $t$ denote the coordinate on $\mathbb{R}$
and let $(X,f\del_{t})$ be a vector field on $M$, where $f$ is any smooth function on the product. Now define an endomorphism of $TM$
by letting $\phi(X,f\del_{t}):=(JX,0)$. Moreover, define $\xi:=\del_{t}$ and a 1-form $\eta=dt.$ The metric on $M$ can be taken to be the product metric $h=g+dt^{2}.$ A straightforward calculation shows that $(M,\phi,\xi,\eta,h)$ is coK\"ahler if and only if $(N,J,g)$ is K\"ahler.
\end{example}
\indent Just as a K\"ahler structure imposes topological restrictions on the manifold, a coK\"ahler structure on an odd dimensional manifold imposes some topological restrictions as well. For example,
it was shown in \cite{[BlGo]} that all the Betti numbers of a compact coK\"ahler manifold are non-zero. Here we state another topological result that we will use in the last section when we construct examples.
\begin{theorem}\cite{[Chn]}
Let $M$ be a compact coK\"ahler manifold. Then the first Betti number of $M$ is odd.
\end{theorem}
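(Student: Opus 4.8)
The plan is to compute the first Betti number $b_1(M)$ through Hodge theory, exploiting the fact that on a compact coK\"ahler manifold the structure tensors $\phi,\xi,\eta$ are parallel with respect to the Levi--Civita connection of $g$ (this is the hallmark of the coK\"ahler condition; see \cite{[CDY]}). Two consequences of parallelism drive the argument: first, $\eta$ is harmonic (it is closed, and $\delta\eta=-\divg\xi=0$ since $\xi$ is parallel), so $[\eta]$ is a nonzero class in $H^{1}(M;\R)$; second, $\xi$ is a Killing field and $\phi$, acting on $1$-forms, commutes with the Hodge Laplacian $\Delta$, so both operators preserve the space $\mathcal H^{1}$ of harmonic $1$-forms. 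I would then split $\mathcal H^{1}$ into the line spanned by $\eta$ and a complementary subspace on which $\phi$ induces a complex structure, forcing $b_1(M)=1+(\text{even})$.

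Concretely, the first step is to show that for every harmonic $1$-form $\alpha$ the function $\alpha(\xi)=\iota_\xi\alpha$ is constant. Since $\xi$ is Killing, $\mathcal L_\xi$ commutes with $\Delta$ and hence sends $\alpha$ to another harmonic form; but by Cartan's formula and $d\alpha=0$ we have $\mathcal L_\xi\alpha=d(\iota_\xi\alpha)$, a form that is simultaneously exact and harmonic. On a compact manifold such a form vanishes, so $d(\alpha(\xi))=0$ and $\alpha(\xi)$ is a constant $c$. Replacing $\alpha$ by $\alpha-c\eta$, which is still harmonic and annihilates $\xi$, yields the direct-sum decomposition
\[
\mathcal H^{1}=\R\,\eta\oplus\mathcal H^{1}_{0},\qquad \mathcal H^{1}_{0}:=\{\beta\in\mathcal H^{1}\st\iota_\xi\beta=0\},
\]
the sum being direct because $\eta(\xi)=1$.

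The second step is to put a complex structure on $\mathcal H^{1}_{0}$. Let $\phi$ act on $1$-forms by $(\phi\beta)(X)=-\beta(\phi X)$. Because $\phi$ is parallel it commutes with the rough Laplacian, and---using that the Ricci operator of a coK\"ahler manifold commutes with $\phi$ on the contact distribution---it commutes with $\Delta$ on $1$-forms via the Weitzenb\"ock formula; hence $\phi$ preserves $\mathcal H^{1}$. Since $\phi\xi=0$ we have $(\phi\beta)(\xi)=-\beta(\phi\xi)=0$, so $\phi$ also preserves $\mathcal H^{1}_{0}$. Finally, from $\phi^{2}X=-X+\eta(X)\xi$ we obtain, for $\beta$ with $\iota_\xi\beta=0$, that $(\phi^{2}\beta)(X)=\beta(\phi^{2}X)=-\beta(X)+\eta(X)\beta(\xi)=-\beta(X)$, so $\phi^{2}=-I$ on $\mathcal H^{1}_{0}$. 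Thus $\mathcal H^{1}_{0}$ is a complex vector space, $\dim_\R\mathcal H^{1}_{0}$ is even, and $b_1(M)=\dim_\R\mathcal H^{1}=1+\dim_\R\mathcal H^{1}_{0}$ is odd.

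The step I expect to be the main obstacle is the verification that $\phi$ preserves harmonic $1$-forms, i.e. that $\phi$ commutes with $\Delta$ on $\Omega^{1}$. Unlike the exterior derivative, a parallel endomorphism need not commute with $d$, so one cannot argue naively; the clean route is the Weitzenb\"ock identity $\Delta=\nabla^{*}\nabla+\mathrm{Ric}$, which reduces the claim to the commutation $[\phi,\mathrm{Ric}]=0$ on the contact distribution. This in turn follows from the parallelism of $\phi$ (equivalently, from the reduction of holonomy it imposes), and it is exactly the point where the full coK\"ahler hypothesis---rather than merely $d\eta=d\Omega=0$---is used. Everything else is formal linear algebra together with the standard Hodge decomposition on the compact manifold $M$.
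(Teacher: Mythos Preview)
The paper does not actually prove this theorem; it is quoted from \cite{[Chn]} and used as a black box in Section~5 to rule out classical coK\"ahler structures on certain product manifolds. There is therefore no proof in the present paper to compare your argument against.

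That said, your proof is correct. The step you single out---that $\phi$ commutes with the Hodge Laplacian on $1$-forms---does go through exactly as you outline. From $\nabla\phi=0$ one obtains $[R(X,Y),\phi]=0$ for all $X,Y$, and combining this with the pair symmetry of the curvature tensor gives $R(\phi X,Y)=-R(X,\phi Y)$. Since $\nabla\xi=0$ forces $R(\xi,\cdot)=0$, the Ricci endomorphism is computed by tracing over an orthonormal frame of $\ker\eta$; because $\phi$ permutes such frames, a reindexing yields $\mathrm{Ric}\circ\phi=\phi\circ\mathrm{Ric}$, and the Weitzenb\"ock identity then delivers $[\phi,\Delta]=0$ on $\Omega^{1}$. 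For the record, the treatment in \cite{[Chn]} develops a broader coK\"ahler Hodge theory (a harmonic decomposition under the operators $\eta\wedge\cdot$ and $\Omega\wedge\cdot$) from which the parity of $b_1$ falls out as a special case of stronger Betti-number relations; your route is more economical for this particular statement.
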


\indent We now move to a very brief review of generalized geometric structures. Throughout this paper we let $M$ be a smooth manifold. Consider the big tangent bundle $TM\oplus T^*M$.  We define a neutral metric on $TM\oplus T^*M$ by$$  \langle X + \alpha  , Y + \beta \rangle =  \frac{1}{2} (\beta (X) + \alpha (Y) )$$ and the (H-twisted) Courant bracket by $$[[X+\alpha, Y+ \beta ]]_{H} = [X,Y] + {\mathcal L}_X\beta -{\mathcal L}_Y\alpha -\frac{1}{2} d(\iota_X\beta - \iota_Y\alpha)+\iota_{Y}\iota_{X}H$$ where $X, Y \in TM$ and $\alpha ,\beta  \in T^*M$ and $H$ is a real closed 3-form. A subbundle of $TM\oplus T^*M$ is said to be involutive if its sections are closed under the ($H$-twisted) Courant bracket\cite{[G1]}.

\begin{definition}
A generalized  almost complex structure on $M$ is an endomorphism $\mathcal J$ of $TM\oplus T^*M$ such that $\mathcal J + \mathcal J^* =  0 $ and $\mathcal J^2 = - Id$. If the $\sqrt{-1}$ eigenbundle $L\subset (TM\oplus TM^{*})\otimes\mathbb{C}$ associated to $\mathcal J$ is involutive with respect to the
(H-twisted) Courant bracket, then $\mathcal J$ is called an (H-twisted) generalized complex structure.
\end{definition}

\indent Here are the prototypical examples in the case when $H=0$:

\begin{example}  \cite {[G1]}
Let $(M^{2n}, J)$ be a complex structure.  Then we get a generalized complex structure by setting
$$\mathcal J_{J} = \left ( \begin{array}{cc}  -J & 0 \\ 0 & J^* \end{array} \right ).$$
\end{example}

\begin{example}  \cite {[G1]}
Let $(M^{2n}, \omega )$ be a symplectic structure.  Then we get a generalized  complex structure by setting
$$\mathcal J_{\omega} = \left ( \begin{array}{cc}  0 & -\omega^{-1} \\ \omega & 0 \end{array} \right ).$$
\end{example}
\indent Diffeomorphisms of $M$ preserve the Lie bracket of smooth vector fields and in fact such diffeomorphisms are the
only automorphisms of the tangent bundle. But in generalized geometry, there is actually more flexibility. That is to say,
given $T\oplus T^{*}$ equipped with the (twisted) Courant bracket, the automorphism group is comprised of the diffeomorphisms of $M$
and some additional symmetries called \emph{B}-field transformations \cite{[G1]}.
\begin{definition}\cite{[G1]}
Let $B$ be a closed two-form which we view as a map from $T \rightarrow T^{*}$ given by interior product. Then the invertible bundle map
$$e^{B}:= \left ( \begin{array}{cc}  1 & 0 \\ B & 1 \end{array} \right):X+\xi \longmapsto X+\xi + \iota_{X}B$$
is called a B-field transformation.
\end{definition}
A $B$-field transformation of a (twisted) generalized (almost) complex structure $( M, e^B {\mathcal J} e^{-B})$ is again a (twisted) generalized (almost) complex structure.

\indent Recall that we can reduce the structure group of $T\oplus T^{*}$ from $O(2n,2n)$ to the maximal compact subgroup $O(2n)\times O(2n)$. This
is equivalent to an orthogonal splitting of $T\oplus T^{*}=V_{+}\oplus V_{-}$, where $V_{+}$ and
$V_{-}$ are positive and negative definite respectfully with respect to the inner product. Thus we can define a positive definite Riemannian metric
on the big tangent bundle by $$G=<,>|_{V_{+}}-<,>|_{V_{-}}.$$
A positive definite metric $G$ on $M$ is an automorphism of $TM\oplus T^*M$ such that $G^{*}=G$ and $G^{2}=1.$ In the presence
of a generalized almost complex structure $\mathcal J_1$, if $G$ commutes with $\mathcal J_1$ ( $G\mathcal J_1 = \mathcal J_1  G$) then $G\mathcal J_1 $ squares to $-1$ and we generate a second generalized almost complex structure, $\mathcal J_2$ $= G\mathcal J_1$, such that $\mathcal J_1$ and $\mathcal J_2$ commute and $G=-\mathcal J_1 \mathcal J_2$.
We are now able to recall the following:
\begin{definition}\cite{[G1]}An (H-twisted) generalized K\"ahler structure is a pair of commuting (H-twisted) generalized complex structures $\mathcal J_{1}, \mathcal J_{2}$ such that
$G=-\mathcal J_{1}\mathcal J_{2}$ is a positive definite metric on $T\oplus T^{*}.$
\end{definition}
The two examples just given together give the standard example of a generalized K\"ahler manifold in the case $H=0$ \cite{[G1]}.
\begin{example}
Consider a K\"ahler structure $(\omega,J,g)$ on $M$. By defining $\mathcal J_{J}$ and $\mathcal J_{\omega}$ as in
Examples 2.6 and 2.7, we obtain a generalized K\"ahler structure on $M$, where
$$G=\left ( \begin{array}{cc}  0 & g^{-1} \\ g & 0 \end{array} \right ).$$

\end{example}
\indent Let us now recall the odd dimensional analog of generalized complex geometry. We use the definition given in \cite{[S]}.
\begin{definition} A generalized almost contact structure on $M$ is a triple $(\Phi, E_\pm)$ where $\Phi $ is an endomorphism of $TM\oplus T^*M$, and $E_+$ and $E_-$ are sections of $TM\oplus T^*M$ which satisfy
\begin{equation}
\Phi + \Phi^{*}=0
\end{equation}
\begin{equation}\label{phi}
\Phi \circ \Phi = -Id + E_+ \otimes E_- + E_- \otimes E_+
\end{equation}
\begin{equation}\label{sections}
 \langle E_\pm, E_\pm \rangle = 0,  \  \    2\langle E_+, E_-  \rangle = 1.
\end{equation}

\end{definition}
An easy and immediate consequence \cite{[S]} of these definitions is
\begin{equation}\label{PhivanishE}
\Phi(E_{\pm})=0.
\end{equation}
Now, since $\Phi$ satisfies $\Phi^3 + \Phi =0$, we see that $\Phi$ has $0$ as well as $\pm \sqrt{-1}$ eigenvalues when viewed as an endomorphism of the complexified big tangent bundle $(TM\oplus T^*M) \otimes { \mathbb C}$.  The kernel of $\Phi$ is $L_{E_+} \oplus L_{E_-}$ where $L_{E_\pm}$ is the line bundle spanned by ${E_\pm}$.  Let $E^{(1,0)}$ be the $\sqrt{-1}$ eigenbundle of $\Phi$.  Let $E^{(0,1)}$ be the $-\sqrt{-1}$ eigenbundle. Observe:

$$
E^{(1,0)} = \lbrace X + \alpha - \sqrt{-1}  \Phi (X + \alpha ) |  \langle E_\pm, X + \alpha \rangle = 0 \rbrace
$$

$$
E^{(0,1)} = \lbrace X + \alpha + \sqrt{-1}  \Phi (X + \alpha ) | \langle E_\pm, X + \alpha \rangle = 0 \rbrace .$$

Then the complex line bundles
$$L^+ = L_{E_+} \oplus E^{(1,0)}$$
and
$$L^- = L_{E_-} \oplus E^{(1,0)}$$
are maximal isotropics.
\begin{definition}
A generalized almost contact structure $(\Phi,E_{\pm})$ is an (H-twisted) generalized contact structure if either $L^{+}$ or $L^{-}$ is closed with respect to the (H-twisted) Courant bracket. The (H-twisted) generalized
contact structure is strong if both $L^{+}$ and $L^{-}$ are closed with respect to the (H-twisted) Courant bracket.
\end{definition}
Here are the standard examples in the untwisted case $H=0:$

\begin{example}  \cite {[PW]}
Let $(\phi , \xi, \eta)$ be a normal almost contact structure on a manifold $M^{2n+1}$.  Then we get a generalized almost contact structure by setting
$$ \Phi = \left ( \begin{array}{cc}  \phi & 0 \\ 0 & -\phi^* \end{array} \right ),\  \   E_+ = \xi,\  \   E_-= \eta $$  where $(\phi^*\alpha )(X) = \alpha (\phi (X)), \   X \in TM,\   \alpha \in T^{*}M$. Moreover, $(\Phi, E_\pm)$ is an example of a strong generalized almost contact structure.
\end{example}

\begin{example}  \cite {[PW]}
Let $( M^{2n+1}, \eta )$ be a contact manifold with $\xi $ the corresponding Reeb vector field so that
$$ \iota_\xi d\eta = 0 \  \  \  \eta ( \xi ) = 1.$$
Then $$\rho ( X) := \iota_X d\eta - \eta ( X)\eta$$ is an isomorphism from the tangent bundle to the cotangent bundle.  Define a bivector field by
$$\pi (\alpha , \beta ) := d\eta (\rho^{-1}( \alpha ), \rho^{-1}( \beta )),$$
where $\alpha, \beta \in T^{*}. $
We obtain a generalized almost contact structure by setting
$$ \Phi = \left ( \begin{array}{cc}  0 & \pi \\ d\eta & 0 \end{array} \right ),\  \   E_+ = \eta,\  \   E_-= \xi .$$
In fact, $(\Phi, E_{\pm})$ is an example which is not strong.
\end{example}

\section{ The Definition of Generalized CoK\"ahler and Some Properties and Examples}
\indent The classical notions of normal almost contact structures, contact metric structures, and cosymplectic structures all have analogs in the generalized context. Until now, the notion of a generalized coK\"ahler structure has not been defined. In this section we propose a definition of a generalized coK\"ahler  structure.\\
\indent Recall an almost contact structure $(\phi, \xi, \eta )$ on an odd dimensional manifold $M$ is called normal if the associated almost complex structure on $M \times {\mathbb R}$ is integrable.  In our previous paper \cite{[GT]} (see also the introduction), we proved that the product of generalized almost contact structures $(M_i, \Phi_i, E_{\pm,i})$, $ i=1,2$ is a generalized complex structure if and only if each $\Phi_i$ is strong and $[[E_{\pm,i}, E_{\mp,i}]] = 0$.  Thus, in keeping with the classical notion of normal almost contact structure, we have
\begin{definition}
A generalized almost contact structure $(M, \Phi,E_{\pm})$ is a normal generalized contact structure
if $\Phi$ is strong with respect to the (H-twisted) Courant bracket and $[[E_+, E_-] ]_{H}= 0.$
\end{definition}
As a consequence of the theorem in our previous paper \cite {[GT]} if we take $M$ to have a normal generalized contact structure
and for $\mathbb{R}$ to have the trivial normal generalized contact structure $(\Phi=0, E_{+}=dt, E_{-}=\partial t)$, then the cone
$M\times \mathbb{R}$ admits a generalized complex structure. Thus our definition is consistent with the more restrictive definition of a normal generalized almost contact structure given in \cite{[V1]}.

\indent An almost contact metric structure on $M^{2n+1}$ is an almost contact structure $(\phi , \xi , \eta )$ and a Riemannian metric $g$ that satisfies $g(\phi X, \phi Y) = g(X,Y) -\eta (X) \eta (Y)$. Sekiya \cite{[S]} defined a generalized almost contact metric structure as a generalized almost contact structure
$(\Phi , E_{\pm})$ along with a generalized Riemannian metric $G$ that satisfies
\begin{equation}\label{compatcond}
-\Phi G \Phi = G - E_+ \otimes E_+ -E_- \otimes E_-.
\end{equation}

We give here a lemma regarding generalized almost contact metric structures that will be useful in what follows.

\begin{lemma}
Let $(\Phi, E_{\pm}, G)$ be a (twisted) generalized almost contact metric structure on $M^{2n+1}.$ Then the following statements hold:
\begin{enumerate}
\item $G(E_\pm) = E_\mp$
\item $G\Phi = \Phi G$
\item $G(E^{(1,0)})= E^{(1,0)}$
\item $(e^B\Phi e^{-B},e^{B}E_{\pm},e^{B}G e^{-B})$ is a (twisted) generalized almost contact metric structure, where
$B$ is a B-field transformation. Furthermore, this (twisted) generalized almost contact metric structure is strong if $(\Phi, E_{\pm})$ is strong.
\end{enumerate}

\end{lemma}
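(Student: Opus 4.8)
The plan is to prove the four assertions in the stated order, since the argument for (ii) uses (i), that for (iii) uses (ii), and (iv) is essentially self-contained. Throughout I will use only the structural identities \eqref{phi}, \eqref{sections}, \eqref{PhivanishE}, the compatibility condition \eqref{compatcond}, and the elementary facts $G^{*}=G$, $G^{2}=\mathrm{Id}$, $\Phi^{*}=-\Phi$.

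For (i), apply \eqref{compatcond} to the section $E_{+}$. The left-hand side $-\Phi G\Phi(E_{+})$ vanishes by \eqref{PhivanishE}, and on the right-hand side $(E_{+}\otimes E_{+})(E_{+})$ is a multiple of $\langle E_{+},E_{+}\rangle=0$, while $(E_{-}\otimes E_{-})(E_{+})$ collapses to $E_{-}$ by the normalization in \eqref{sections}. Hence $0=G(E_{+})-E_{-}$, and the same computation applied to $E_{-}$ gives $G(E_{-})=E_{+}$. For (ii), set $P:=E_{+}\otimes E_{+}+E_{-}\otimes E_{-}$ and $Q:=E_{+}\otimes E_{-}+E_{-}\otimes E_{+}$, so that \eqref{compatcond} reads $\Phi G\Phi=P-G$ and \eqref{phi} reads $\Phi^{2}=-\mathrm{Id}+Q$. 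Left-multiplying $\Phi G\Phi=P-G$ by $\Phi$ and substituting $\Phi^{2}=-\mathrm{Id}+Q$ gives $-G\Phi+QG\Phi=\Phi P-\Phi G$. Now $\Phi P=0$, because each summand of $P$ takes values in $\mathrm{span}\{E_{+}\}$ or $\mathrm{span}\{E_{-}\}$ and $\Phi(E_{\pm})=0$; and $QG\Phi=0$, because for every section $v$ one has $\langle E_{\mp},G\Phi v\rangle=\langle GE_{\mp},\Phi v\rangle=\langle E_{\pm},\Phi v\rangle=-\langle\Phi E_{\pm},v\rangle=0$ by (i), $G^{*}=G$, $\Phi^{*}=-\Phi$ and \eqref{PhivanishE}, so every summand of $Q$ annihilates $G\Phi v$. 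Therefore $-G\Phi=-\Phi G$, which is (ii).

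Part (iii) is then immediate: if $v\in E^{(1,0)}$, so $\Phi v=\sqrt{-1}\,v$ and $\langle E_{\pm},v\rangle=0$, then $\Phi(Gv)=G\Phi v=\sqrt{-1}\,Gv$ by (ii), and $\langle E_{\pm},Gv\rangle=\langle GE_{\pm},v\rangle=\langle E_{\mp},v\rangle=0$ by (i); hence $Gv\in E^{(1,0)}$, and since $G^{2}=\mathrm{Id}$ the map $G$ restricts to an involution of $E^{(1,0)}$, giving the equality. For (iv) the basic points are that a $B$-field transformation is orthogonal, $(e^{B})^{*}=e^{-B}$ (from the skew-symmetry of $B$), and that $e^{B}(a\otimes b)e^{-B}=(e^{B}a)\otimes(e^{B}b)$ for all sections $a,b$. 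Conjugating \eqref{phi}, \eqref{sections}, $\Phi+\Phi^{*}=0$ and \eqref{compatcond} by $e^{B}$ then shows at once that $(e^{B}\Phi e^{-B},e^{B}E_{\pm})$ is a generalized almost contact structure and that $e^{B}Ge^{-B}$ is a positive definite generalized metric (positivity from $\langle v,e^{B}Ge^{-B}v\rangle=\langle e^{-B}v,Ge^{-B}v\rangle$) satisfying the compatibility condition for the new data; as $B$ is closed, the twisting form is unchanged, so this is again a (twisted) generalized almost contact metric structure. For the strong case, $e^{B}$ carries the $\sqrt{-1}$-eigenbundle of $\Phi$ onto that of $e^{B}\Phi e^{-B}$ and preserves the isotropy conditions $\langle e^{B}E_{\pm},\,\cdot\,\rangle=0$, so the maximal isotropics transform as $\widetilde{L}^{\pm}=e^{B}L^{\pm}$; since a closed $B$-field transformation is an automorphism of the $H$-twisted Courant bracket, closedness of $L^{+}$ and $L^{-}$ passes to $\widetilde{L}^{\pm}$, so strength is preserved.

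The manipulations in (i)--(iii) are routine once \eqref{compatcond} is combined with $\Phi(E_{\pm})=0$. The only place that needs genuine care is (iv): one must keep track of how $e^{B}$ conjugates each of $\Phi$, $G$ and $E_{\pm}\otimes E_{\pm}$, and invoke the standard fact (the same one behind the remark that a $B$-transform sends a twisted generalized complex structure to a twisted generalized complex structure) that a closed $B$-field transformation is an automorphism of the $H$-twisted Courant bracket. Beyond this bookkeeping I do not expect any obstacle.
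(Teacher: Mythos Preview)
Your proof is correct and follows essentially the same approach as the paper: (i) is obtained by evaluating \eqref{compatcond} at $E_{\pm}$, (ii) by multiplying \eqref{compatcond} by $\Phi$ and killing the residual $E_{\pm}$-terms via (i) and $E_{\pm}\circ\Phi=0$, and (iv) by checking that conjugation by $e^{B}$ respects each structural identity and that a closed $B$-field is a Courant automorphism. Your argument for (iii) is slightly more streamlined than the paper's (you work directly with the eigenvalue condition $\Phi v=\sqrt{-1}\,v$ rather than with the parametrization $v=X+\alpha-\sqrt{-1}\,\Phi(X+\alpha)$), and in (iv) you invoke the general identity $e^{B}(a\otimes b)e^{-B}=(e^{B}a)\otimes(e^{B}b)$ whereas the paper verifies this explicitly for $a=b=E_{\pm}$; but these are cosmetic differences, not alternative strategies.
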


\begin{proof}
Since $(\Phi, E_{\pm},G)$ is a generalized almost contact metric structure we have
$$0= -\Phi G \Phi(E_+)  = G(E_+) - E_+ \otimes E_ +(E_+)  - E_- \otimes E_-(E_+)  = G(E_+) - E_-$$
and so $G(E_+) = E_-.$ Similarly one shows $G(E_-) = E_+$.\\
\indent For property (ii), recall we have,

$$-\Phi G \Phi = G - E_+ \otimes E_+ - E_- \otimes E_- .$$ Apply $\Phi$ to both sides getting
$$-\Phi^2 G \Phi = \Phi G.$$
Using the formula (2.2) for $\Phi^2$, we get
$$G\Phi - E_+ \otimes E_- \circ G\Phi - E_- \otimes E_+\circ G\Phi  = \Phi G $$
But Lemma 1 in \cite{[GT]} gives that $E_\pm \circ \Phi = 0$.
This combined with (i) and the self-adjointness of $G$ then imply
$$G\Phi = \Phi G.$$

To establish property (iii) we show first that $E^{(1,0)} \subset G(E^{(1,0)})$. Let $Y+\beta \in E^{(1,0)}$.  Then $Y+ \beta = X + \alpha - \sqrt{-1} \Phi ( X+ \alpha )$ for some $X+ \alpha \in TM\oplus T^*M $ such that $<X + \alpha , E_\pm> = 0$.
By property (i) and the fact that $G$ is self-adjoint, we obtain

$$0 = < G(X + \alpha ), E_\pm>.$$

Now consider  $$G(X + \alpha ) - \sqrt{-1} \Phi G ( X + \alpha) \in E^{(1,0)}.$$
By applying $G$ again and using the fact that $G^2 = {\rm Id}$ and $\Phi $ and $G $ commute gives the first inclusion.

To show inclusion in the other direction, let $Y+ \beta \in G(E^{(0,1)})$.  Then $Y + \beta = G( X + \alpha - \sqrt{-1} \Phi ( X + \alpha )) $ for some $ X + \alpha \in TM \oplus T^*M$ such that $< X+ \alpha, E_\pm>= 0$.
But,
$$0 = < X + \alpha, E_\pm> = < X+ \alpha , G(E_\mp )> =<G(X+ \alpha ), E_\mp >$$
since $G$ is self-adjoint. Thus, $G(X+\alpha ) - \sqrt{-1} \Phi G(X+\alpha ) \in E^{(1,0)}$.
Since $\Phi$ and $G$ commute, $ Y+\beta = G(X+\alpha )- \sqrt{-1} G \Phi (X+ \alpha ) \in E^{(1,0)}$.

For property  (iv),
Sekiya \cite{[S]} showed that $(e^B\Phi e^{-B},e^{B}E_{\pm})$ is again a (twisted) generalized almost contact structure. It remains to show that
$e^{B}Ge^{-B}$ satisfies the compatibility condition 3.1 with sections $e^{B}E_{\pm}$. This reduces to showing that


$$e^{B}(E_{\pm}\otimes E_{\pm})e^{-B}= e^{B}E_{\pm}\otimes e^{B} E_{\pm}.$$
Let $X+\alpha \in T\oplus T^{*}$ and so
$$(e^{B}(E_{+}\otimes E_{+})e^{-B})(X+\alpha )=E_{+}(X+\alpha -\iota_{X}B)e^{B}E_{+}=(E_{+}(X+\alpha)-\iota_{\xi_{+}}\iota_{X} B)e^BE_{+}$$
where we have used that $E_{+}=\xi_{+}+\eta_{+}.$
On the other hand, we have
\begin{align*}
&e^{B}E_{+}\otimes e^{B} E_{+}(X+\alpha)=(e^BE_{+})(X+\alpha)e^{B}E_+=(E_{+}+\iota_{\xi_+}B)(X+\alpha)e^{B}E_+\\
&=(E_{+}(X+\alpha)+\iota_{X}\iota_{\xi_{+}}B)e^{B}E_+ =(E_{+}(X+\alpha)-\iota_{\xi_+}\iota_{X}B)e^{B}E_+\\
\end{align*}
where we have used the general property that $\iota_{X}\iota_{Y}=-\iota_{Y}\iota_{X.}$ A similar argument is used to show
$$e^{B}(E_{-}\otimes E_{-})e^{-B}= e^{B}E_{-}\otimes e^{B} E_{-}.$$
The strong property follows immediately since the (twisted) Courant bracket is invariant under $B$-field transforms. Hence
$L^{\pm}$ being (twisted) Courant involutive is preserved as well. (See also Proposition 3.42 in \cite{[G1]}.)

\end{proof}

\begin{remark} Observe that an easy consequence of Lemma 3.2  (i) and (ii) together with (3.1) is that $(M, G\Phi, GE_{\pm}=E_{\mp},G)$ is again a generalized almost contact metric structure.
\end{remark}
Now that all of the pieces are in place, we are ready for our definition of a generalized coK\"ahler structure.
\begin{definition} A normal generalized contact metric structure $(M, \Phi, E_{\pm}, G)$ is (H-twisted) generalized coK\"ahler if $G\Phi$
is also strong with respect to the (H-twisted) Courant bracket.
\end{definition}
\begin{remark}
The sections associated to $G\Phi$ are $GE_{\pm}=E_{\mp}$ and so automatically we get that $[[E_{\pm},E_{\mp}]]=0$ for the generalized contact metric structure associated with $G\Phi$.  Hence,  we could have
alternatively defined a coK\"ahler structure to be a generalized contact metric structure $(M, \Phi , E_\pm , G)$ such that both $(M, \Phi,E_{\pm} , G)$ and $(M, G\Phi,E_{\mp} , G)$ are normal.
\end{remark}

If $(M, \Phi, E_\pm, G)$ is a normal generalized contact metric structure then by definition $\Phi $ is strong. It is important to emphasize that there may be normal generalized contact metric structures where $G\Phi$ is not strong as the following example shows.

\begin{example} Let  $ M=SU(2)$.
On the Lie algebra $su(2)$ choose a basis $\lbrace X_1,  X_2,  X_ 3 \rbrace $ and a dual basis $\lbrace  \sigma^1, \sigma^2, \sigma^  3 \rbrace$ such that $[ X_i, X_j ] = -X_k$ and $ d\sigma^i = \sigma^j \wedge \sigma^k$ for cyclic permutations of $\lbrace i, j, k \rbrace$.  One can construct a classical normal almost contact structure by taking $\phi = X_2 \otimes \sigma^1 - X_1 \otimes \sigma^2$, $ \xi = X_3$ and $\eta = \sigma^3$.  Then, as in Example 2.13, we can construct a generalized almost contact structure by letting
$$ \Phi = \left ( \begin{array}{cc}  \phi & 0 \\ 0 & -\phi^* \end{array} \right ),\  \   E_+ = X_{3},\  \   E_-= \sigma^{3} $$  where $(\phi^*\alpha )(X) = \alpha (\phi (X)), \   X \in TM,\   \alpha \in T^{*}M$.  One computes easily that $E^{(1,0)}_\phi = {\rm span} \lbrace X_1- \sqrt{-1}X_2 ,  \sigma^1- \sqrt{-1} \sigma^2 \rbrace$ so that $L^+ = {\rm span} \lbrace X_3, X_1 - \sqrt{-1}X_2 ,  \sigma^1- \sqrt{-1} \sigma^2 \rbrace$ and $L^{-} =  {\rm span} \lbrace \sigma^3, X_1 - \sqrt{-1}X_2 ,  \sigma^1 -\sqrt{-1} \sigma^2 \rbrace$.  For $L^{+}$, the relevant Courant brackets give
$$[[X_1- \sqrt{-1}X_2,\sigma^1- \sqrt{-1} \sigma^2]]=0, \hspace{.4cm} [[X_{3},\sigma^{1}-\sqrt{-1}\sigma^{2}]]=-\sqrt{-1}(\sigma^{1}-\sqrt{-1}\sigma^{2})$$
as well as $[[X_{3}, X_{1}-\sqrt{-1}X_{2}]]=\sqrt{-1}(X_{1}-\sqrt{-1}X_2).$ Similarly, for $L^{-}$ the relevant Courant bracket is
$$[[\sigma^{3},X_{1}-\sqrt{-1}X_2]]=\sqrt{-1}(\sigma^{1}-\sqrt{-1}\sigma^{2}).$$ Since $(\phi,\xi,\eta)$ is normal, we have that $[[E_{+},E_{-}]]=\mathcal{L}_{X_3}\sigma^{3}=0.$ Thus $(\Phi,E_{\pm})$ is a normal generalized contact structure.\\
\indent Now, define a generalized metric $G$ on $TM\oplus T^*M$ by $\left ( \begin{array}{cc}  0 & g^{-1} \\ g & 0 \end{array} \right )$ where $g$ is any Riemannian metric compatible with the almost contact structure. It is a straightforward calculation to verify $G$ is compatible with the $(\Phi,E_{\pm})$. So we have that $G\Phi$  defines a generalized almost contact structure on $SU(2)$. But observe that $$L^+_{G\Phi } =  {\rm span} \lbrace \sigma^3 , X_1 - \sqrt{-1} \sigma^2 , X_2  + \sqrt{-1} \sigma^1 \rbrace$$ and hence $[[ X_1 - \sqrt{-1} \sigma^2, X_2 + \sqrt{-1} \sigma^1]] = -X_3 \notin L^+_{G\phi}$. Therefore, $G\Phi$ is not strong even though $\Phi$ is strong.
\end{example}
\begin{remark}
If we instead use an H-twisted Courant bracket twisted by the real closed three form $H=\sigma^{1}\wedge\sigma^{2}\wedge\sigma^{3}$ then one
can similarly show that $\Phi$ is strong and $G\Phi$ is not strong.
\end{remark}

Recall that a K\"ahler structure on $M$ induces a generalized K\"ahler structure on $M$. The odd dimensional version of this holds as well:
\begin{proposition} Any coK\"ahler manifold is generalized coK\"ahler.
\end{proposition}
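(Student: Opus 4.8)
The plan is to start from a classical coK\"ahler structure $(\phi,\xi,\eta,g)$ on $M^{2n+1}$ and build the candidate generalized data following Examples 2.13 and 2.9, then verify it satisfies Definition 3.6. Set
\[
\Phi=\left(\begin{array}{cc}\phi & 0\\ 0 & -\phi^{*}\end{array}\right),\qquad E_{+}=\xi,\qquad E_{-}=\eta,
\]
and take $G$ to be the generalized metric built from $g$ as in Example 2.9, namely $G=\left(\begin{smallmatrix}0 & g^{-1}\\ g & 0\end{smallmatrix}\right)$. Since any coK\"ahler structure is in particular normal almost contact, Example 2.13 tells us $(\Phi,E_{\pm})$ is a \emph{strong} generalized almost contact structure, so the line bundles $L^{+},L^{-}$ are both Courant involutive; and the normality $N_{\phi}=-2\xi\otimes d\eta=0$ together with $d\eta=0$ gives $[[E_{+},E_{-}]]=\mathcal L_{\xi}\eta=0$. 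Hence $(M,\Phi,E_{\pm},G)$ is a normal generalized contact structure. The first genuine check is that $G$ is compatible with $(\Phi,E_{\pm})$ in the sense of (3.1); this is a direct block-matrix computation using $g(\phi X,\phi Y)=g(X,Y)-\eta(X)\eta(Y)$, which translates into the operator identity $-\Phi G\Phi=G-E_{+}\otimes E_{+}-E_{-}\otimes E_{-}$. So far nothing uses the \emph{closedness} of $\Omega$.

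The crux is to show $G\Phi$ is strong, i.e.\ that the two eigenbundles $L^{\pm}_{G\Phi}$ are Courant involutive. Here one computes
\[
G\Phi=\left(\begin{array}{cc}0 & g^{-1}\\ g & 0\end{array}\right)\left(\begin{array}{cc}\phi & 0\\ 0 & -\phi^{*}\end{array}\right)=\left(\begin{array}{cc}0 & -g^{-1}\phi^{*}\\ g\phi & 0\end{array}\right),
\]
and the off-diagonal blocks are, up to sign, the musical translates of the fundamental two-form $\Omega(X,Y)=g(X,\phi Y)$ and of the induced bivector. In other words $G\Phi$ is, on the contact distribution $\ker\eta$, exactly the generalized complex structure attached to the symplectic-type form $\Omega$, in the spirit of Example 2.7; and $\Omega$ being \emph{closed} is precisely what makes that structure integrable. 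So the strategy is: describe $L^{+}_{G\Phi}$ and $L^{-}_{G\Phi}$ explicitly as $L_{E_{\mp}}$ plus the $\pm\sqrt{-1}$-eigenbundle of $G\Phi$ inside $(\ker\eta\oplus\mathrm{Ann}(\xi))\otimes\mathbb C$, and then check Courant-closedness by a bracket computation, feeding in $d\eta=0$, $d\Omega=0$, and $N_{\phi}=0$. It is convenient to do this via the $B$-field/diffeomorphism description: on $M\setminus$(flow direction) the coK\"ahler condition locally splits $M$ as (K\"ahler)$\times\mathbb R$ (Example 2.5), and the generalized complex structure $G\Phi$ is locally the product of the K\"ahler one $\mathcal J_{\omega}$ with the trivial one in the $\partial_{t}$ direction, whence involutivity is inherited; alternatively one argues directly that $[[\,\cdot,\cdot]]$ of sections of $L^{\pm}_{G\Phi}$ stays in $L^{\pm}_{G\Phi}$ using the Cartan-calculus identities for $d\Omega$.

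I expect the main obstacle to be the bookkeeping in the $G\Phi$-integrability computation: one must carefully identify the $\sqrt{-1}$-eigenbundle of $G\Phi$, handle the line spanned by $E_{\mp}$ separately from the "transverse" part, and track how the closedness of $\Omega$ enters the Courant bracket (this is where $d\Omega=0$ does the work that $N_{\phi}=0$ cannot do, and conversely). Everything else — the compatibility (3.1), normality, and strength of $\Phi$ — is either immediate from Examples 2.13 and 2.9 or a short block-matrix verification. Once $G\Phi$ is shown strong, Definition 3.6 is satisfied and the proposition follows; note that in the untwisted case $H=0$ there is no cocycle to worry about, so the statement is exactly that coK\"ahler $\Rightarrow$ generalized coK\"ahler with this standard model.
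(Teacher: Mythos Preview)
Your proposal is correct and follows essentially the same route as the paper's direct proof: build $(\Phi,E_\pm,G)$ from $(\phi,\xi,\eta,g)$, verify the compatibility (3.1), and check that both $\Phi$ and $G\Phi$ are strong with $[[E_+,E_-]]=0$. The paper short-circuits your hands-on verification of the strongness of $G\Phi$ by observing that $G\Phi_\phi=\Phi_\Omega$ is exactly the almost-cosymplectic generalized contact structure and citing \cite{[PW]} for its strongness (so your local-splitting or direct Courant computation is unnecessary); it also records a one-line alternative proof (Corollary~4.3) by applying Theorem~1.3 to $M\times\mathbb R$ with the trivial coK\"ahler structure on the $\mathbb R$ factor.
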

\begin{proof}
Let $(\phi, \xi, \eta, g)$ be a coK\"ahler structure on $M$. Let $\pi$ be the bivector field as defined in Example 2.14 and let $\Omega$ be the fundamental two form as given in (2.2).
Define $$\Phi_{\phi} = \left ( \begin{array}{cc}  \phi & 0 \\ 0 & -\phi^{*} \end{array} \right ),  \Phi_{\Omega} = \left ( \begin{array}{cc}  0 & \pi^{\sharp} \\ \Omega^{\flat} & 0 \end{array} \right ),  G = \left ( \begin{array}{cc}  0 & g^{-1} \\ g & 0 \end{array} \right ).$$
We will argue that $(\Phi_{\phi},E_{\pm}, G)$ is a generalized coK\"ahler structure on $M.$ Let us first verify that such a $G$ is compatible with $(\Phi_{\phi},E_{+}=\xi,E_{-}=\eta)$. The compatibility condition (3.1) for our case reduces to verifying the
following equality:
\begin{equation}
\phi g^{-1}\phi^{*}\alpha + \phi^{*}g\phi X = g^{-1}\alpha+gX-\alpha(\xi)\xi -\eta(X)\eta,
\end{equation}
where $X+\alpha \in T\oplus T^{*}.$
Now, $\phi^{*}g\phi X = g(\phi X, \phi)=g(X,\cdot)-\eta(X)\eta$, using the compatibility of $g.$ Furthermore,
since $g$ is a Riemannian metric which induces an isomorphism between $T$ and $T^{*}$, let us write
$\alpha = g(Y)$ for some $Y\in T.$ Then,
$$\phi g^{-1}\phi^{*}gY=\phi g^{-1}g(Y,\phi)=-\phi g^{-1}g(\phi Y,)=-\phi^{2}Y.$$ But this is precisely,
$g^{-1}\alpha -\alpha(\xi)\xi$, using the fact that $g(Y,\xi)=\eta (Y) $ and $g(X,\phi Y)=-g(\phi X,Y).$

Since we assume $(\phi, \xi, \eta, g)$ is a coK\"ahler structure on $M$, it is normal. Thus,
$[[E_{+},E_{-}]]=[[\xi,\eta]]=0$. Moreover, by  Proposition 3.4 of \cite{[PW]} $(M, \Phi_{\phi},E_{+}=\xi,E_{-}=\eta)$
is strong. Thus we have showed $(M, \Phi_{\phi},E_{+}=\xi,E_{-}=\eta)$ is a normal generalized contact metric structure. Now,
a straightforward computation shows $G\Phi_{\phi}=\Phi_{\Omega}$. Again, in \cite{[PW]}, it was
shown $\Phi_{\Omega}$ is strong. Therefore, $(M, \Phi_{\phi},E_{\pm}, G)$ is a generalized coK\"ahler structure.
\end{proof}

\section{Proof of the Main Theorem}
\indent Before we begin the proof of Theorem 1.3, it will be useful to formulate a twisted version of Theorem 1.2.
Fix the generalized almost contact structures $(M_i, \Phi_i, E_{\pm,i})$, $i=1,2.$ Now on
the product even dimensional manifold $M_{1}\times M_{2}$ we have the generalized almost complex structure given by (1.2) in \cite{[GT]}. We showed in \cite{[GT]} that $(M_{1}\times M_{2}, \mathcal J)$ is a generalized complex structure if and only if $(M_i, \Phi_i, E_{\pm,i})$ are each strong generalized contact structures and $[[E_{\pm,i},E_{\mp,i}]]=0.$ It is easy to see that this theorem trivially extends to the $H$-twisted case. The only new ingredient needed in the proof is how the $\tilde{H}$-twisted Courant bracket behaves on the product $M_{1}\times M_{2}.$ It is given by:

\begin{equation}
[[(A,B), (C,D)]]_{\tilde{H}}=([[A,C]]_{H_1},[[B,D]]_{H_2})
\end{equation}
where $H_{i}$ is a real closed three form on $M_{i}$, ($i=1,2$), $\tilde{H}=H_1+H_2$, $(A,B)$ and $(C,D)$ are sections of the generalized tangent bundle of $M_1\times M_2$. Here is now the twisted
version of Theorem 1.2.
\begin{theorem}\cite{[GT]}
Let $M_1$ and $M_2$ be odd dimensional smooth manifolds each with ($H_{i}$-twisted) generalized almost contact structures $(\Phi_i, E_{\pm,i})$ $i=1,2$.  Then $M_1\times M_2$ admits an ($\widetilde{H}$-twisted) generalized almost complex structure $\mathcal J$. Further $\mathcal J$ is an ($\widetilde{H}$- twisted) generalized complex structure if and only if both $(\Phi_i, E_{\pm,i})\  \   i=1,2$ are strong ($H_i$-twisted) generalized contact structures and $[[E_{\pm, i},E_{\mp ,i}]]_{H_{i}} = 0$.
\end{theorem}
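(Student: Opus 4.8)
The plan is to deduce the statement from its untwisted counterpart, Theorem~1.2 (proved in \cite{[GT]}), with essentially no new computation; the one genuinely new ingredient is an understanding of how the $\widetilde H$-twisted Courant bracket behaves on the product $M_1\times M_2$, which is exactly what formula~(4.1) records.

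First I would justify (4.1). Write $\widetilde H=\pi_1^*H_1+\pi_2^*H_2$, where $\pi_i\from M_1\times M_2\to M_i$ are the projections, and split any section of $T(M_1\times M_2)\oplus T^*(M_1\times M_2)$ into its $M_1$- and $M_2$-components, writing it as $(A,B)$ with $A$ a section of $TM_1\oplus T^*M_1$ and $B$ a section of $TM_2\oplus T^*M_2$ pulled back to the product. The untwisted part of the Courant bracket already respects this splitting, because on a product the Lie bracket of vector fields, the Lie derivative $\mathcal L$, and the operator $d\iota$ all preserve the decomposition of vector fields and $1$-forms into their $M_1$- and $M_2$-parts. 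For the twist term $\iota_Y\iota_X\widetilde H$, where $X=(X_1,X_2)$ and $Y=(Y_1,Y_2)$ are the vector parts, contracting $\pi_1^*H_1$ with $X_2$ or $Y_2$ gives zero and contracting $\pi_2^*H_2$ with $X_1$ or $Y_1$ gives zero, so no terms mixing the two factors survive and $\iota_Y\iota_X\widetilde H=\iota_{Y_1}\iota_{X_1}H_1+\iota_{Y_2}\iota_{X_2}H_2$, the first summand living on $M_1$ and the second on $M_2$. Assembling these observations yields (4.1).

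Next I would note that the generalized almost complex structure $\mathcal J$ is built from $\Phi_i$ and $E_{\pm,i}$ by a pointwise algebraic formula that does not involve $H$ at all; hence the first assertion of the theorem — that $M_1\times M_2$ carries the $\widetilde H$-twisted generalized almost complex structure $\mathcal J$ — is literally the first assertion of Theorem~1.2, and the description of the $\sqrt{-1}$-eigenbundle $L$ of $\mathcal J$ in terms of the $E_{\pm,i}$ and the eigenbundles $E_i^{(1,0)}$ is unchanged. For the integrability statement, recall that in \cite{[GT]} the Courant involutivity of $L$ (in the case $H=0$) is checked by evaluating Courant brackets on a spanning set of local sections of $L$, each of product type $(s_1,0)$ or $(0,s_2)$ with $s_i$ a section of $TM_i\oplus T^*M_i$ pulled back to the product, the $C^{\infty}(M_1\times M_2)$-coefficients being handled by the Leibniz rule for the Courant bracket. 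By (4.1), every such bracket, computed with the $\widetilde H$-twisted bracket, is exactly the pair of the corresponding $H_i$-twisted brackets on the two factors. Hence the entire argument of \cite{[GT]} carries over verbatim once each bracket is decorated with the appropriate $H_i$, giving: $L$ is $\widetilde H$-Courant involutive if and only if each of $L^+_i$ and $L^-_i$ is $H_i$-Courant involutive — equivalently, $(\Phi_i,E_{\pm,i})$ is a strong $H_i$-twisted generalized contact structure — and $[[E_{\pm,i},E_{\mp,i}]]_{H_i}=0$. This is precisely the claim.

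The only step that requires genuine care is the derivation of (4.1), and within it the verification that the interior products defining the twist produce no mixing between the two factors. Once that is in place, everything else is a mechanical transcription of the untwisted proof from \cite{[GT]}, which is what the phrase ``trivially extends'' is meant to convey.
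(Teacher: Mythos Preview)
Your proposal is correct and follows exactly the paper's own approach: the paper does not give a separate proof of this theorem but simply observes that Theorem~1.2 ``trivially extends'' to the twisted case, the sole new ingredient being the product formula~(4.1) for the $\widetilde H$-twisted Courant bracket. You have in fact supplied more detail than the paper does, by explicitly justifying~(4.1) via the vanishing of the cross-factor contractions $\iota_{X_2}\pi_1^*H_1$ and $\iota_{X_1}\pi_2^*H_2$.
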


To prove Theorem 1.3, we have to introduce generalized metrics
$G_i$ which are compatible with $\Phi_i$, $i=1,2$. Then on $M_{1}\times M_{2}$, with its product metric $G=(G_{1},G_{2})$, we will have two generalized almost complex structures $\mathcal{J}_{1}$ and $\mathcal{J}_{2}=G\mathcal{J}_1$ that commute. We will prove that
$\mathcal{J}_2$ is integrable if and only if $G_i\Phi_{i}$ are strong.\\
\indent First we record the following lemma which will be useful. This was proven in \cite{[GT]} in the case in which $H=0$ but it is trivial to show that the lemma still holds in the twisted case.
\begin{lemma} \cite{[GT]} Let $(M, \Phi, E_\pm)$ be a (H-twisted) generalized almost contact structure. Then $\Phi $ is strong if and only if  $[[L^+, E^{(1,0)}]]_{H} \subset E^{(1,0)}$ and $[[L^-, E^{(1,0)}]]_{H} \subset E^{(1,0)}$.
\end{lemma}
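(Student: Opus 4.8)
The statement is a characterization of the ``strong'' condition for a generalized almost contact structure in terms of how the Courant bracket behaves on $E^{(1,0)}$, and the natural approach is to unwind the definitions. Recall that $\Phi$ is strong means both $L^+=L_{E_+}\oplus E^{(1,0)}$ and $L^-=L_{E_-}\oplus E^{(1,0)}$ are closed under the ($H$-twisted) Courant bracket. So I would first fix notation: write a general section of $L^+$ as $cE_+ + s$ with $c$ a smooth function and $s$ a section of $E^{(1,0)}$, and similarly for $L^-$. The heart of the proof is then to show that the full closedness of $L^\pm$ is equivalent to just the two ``mixed'' containments $[[L^+,E^{(1,0)}]]_H\subset E^{(1,0)}$ and $[[L^-,E^{(1,0)}]]_H\subset E^{(1,0)}$.

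For the forward direction (strong $\Rightarrow$ the two containments), suppose $L^+$ and $L^-$ are both closed. Take $u\in L^+$ and $v\in E^{(1,0)}$. Since $E^{(1,0)}\subset L^-$, we have $[[u,v]]_H\in \cdots$ -- wait, $u\in L^+$ and $v\in L^+$ too (as $E^{(1,0)}\subset L^+$), so $[[u,v]]_H\in L^+$; but also $v\in L^-$, and if $u$ happened to be in $L^-$ we'd be done. The cleaner observation is: $E^{(1,0)} = L^+\cap L^-$, so for $u\in L^+$, $v\in E^{(1,0)}$ we get $[[u,v]]_H\in L^+$ (closedness of $L^+$), and we must additionally land in $L^-$. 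This is where I would use the structure of the brackets with $E_\pm$: writing any element of $L^+$ as $cE_++s$, compute $[[cE_++s,v]]_H = c[[E_+,v]]_H + (\text{Leibniz terms in }\langle\cdot,\cdot\rangle) + [[s,v]]_H$, and use the isotropy relations and $\langle E_\pm, E^{(1,0)}\rangle = 0$ to see that the ``defect'' from lying in $E^{(1,0)}$ is controlled by the $E_+$-component; closedness of $L^+$ forces that component to vanish. The analogous argument with $L^-$ handles $[[L^-,E^{(1,0)}]]_H$.

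For the reverse direction (the two containments $\Rightarrow$ strong), assume $[[L^\pm,E^{(1,0)}]]_H\subset E^{(1,0)}$ and show $L^+$ is closed (then $L^-$ is symmetric). Take two sections $c_1E_++s_1$ and $c_2E_++s_2$ of $L^+$. Expand the bracket using bilinearity and the Leibniz/derivation properties of the Courant bracket. The terms $[[s_1,s_2]]_H$, $[[E_+,s_2]]_H$, $[[s_1,E_+]]_H$ all lie in $E^{(1,0)}\subset L^+$ by hypothesis (the last two because $E_+\in L^+$, so $[[L^+,E^{(1,0)}]]_H\subset E^{(1,0)}$ applies, using symmetry of the bracket up to an exact term which is isotropic-controlled). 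The remaining term is a multiple of $[[E_+,E_+]]_H$ plus function-derivative terms times $E_+$; the self-bracket $[[E_+,E_+]]_H$ is $-d\langle E_+,E_+\rangle/1 + \iota_{E_+}\iota_{E_+}H$-type expression that vanishes or is proportional to $E_+$ by the isotropy $\langle E_+,E_+\rangle=0$. Hence the whole bracket lies in $L_{E_+}\oplus E^{(1,0)} = L^+$.

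The main obstacle I anticipate is bookkeeping the non-tensorial (derivation) terms in the Courant bracket -- the $\cL_X\beta$, $d\iota$, and $\iota\iota H$ pieces -- when one factor carries a function coefficient $c$; one must check that the extra $dc$-type terms always come out parallel to $E_\pm$ and that the isotropy relations $\langle E_\pm,E_\pm\rangle=0$, $2\langle E_+,E_-\rangle=1$, together with $\langle E_\pm,E^{(1,0)}\rangle=0$ and $\Phi(E_\pm)=0$, kill precisely the terms that would otherwise obstruct membership in $E^{(1,0)}$ or $L^\pm$. This is exactly the computation carried out in \cite{[GT]} for $H=0$, and since the $H$-twist only adds the term $\iota_{Y}\iota_{X}H$ -- which is tensorial in $X,Y$ and vanishes on the relevant isotropic combinations in the same way -- the proof transfers verbatim; I would simply remark that the argument of \cite{[GT]} goes through unchanged with $[[\cdot,\cdot]]$ replaced by $[[\cdot,\cdot]]_H$.
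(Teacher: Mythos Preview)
The paper does not give a proof of this lemma at all: it simply cites \cite{[GT]} for the $H=0$ case and states in one sentence that ``it is trivial to show that the lemma still holds in the twisted case.'' Your final paragraph reaches exactly the same conclusion---defer to \cite{[GT]} and observe that the extra $\iota_Y\iota_X H$ term is tensorial and changes nothing---so your bottom line agrees with the paper.

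Your sketch goes well beyond what the paper offers, and most of it is sound. The reverse direction is fine: once $[[L^\pm,E^{(1,0)}]]_H\subset E^{(1,0)}$ is assumed, expanding $[[c_1E_++s_1,c_2E_++s_2]]_H$ and using $[[E_+,E_+]]_H=0$ (the bracket as defined in the paper is antisymmetric) together with the function-Leibniz terms landing in $L_{E_+}$ does give closedness of $L^+$, and symmetrically for $L^-$. In the forward direction, however, there is a genuine gap. Your observation $E^{(1,0)}=L^+\cap L^-$ cleanly handles $[[E^{(1,0)},E^{(1,0)}]]_H\subset E^{(1,0)}$, but for $[[E_+,v]]_H$ with $v\in E^{(1,0)}$ you only get membership in $L^+$ from closedness of $L^+$; your assertion that ``closedness of $L^+$ forces that component to vanish'' does not follow, since an $E_+$-component is perfectly consistent with lying in $L^+$. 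Killing that component requires an additional argument---e.g.\ pairing against $E_-$ and invoking the Leibniz-type identity for the Courant/Dorfman bracket with respect to $\langle\cdot,\cdot\rangle$, together with the orthogonality relations $\langle v,E_\pm\rangle=0$---which is presumably what \cite{[GT]} does explicitly. Since you ultimately defer to \cite{[GT]} anyway, this gap is harmless for the purposes of the present paper, but you should be aware that your sketch of the forward implication is incomplete as written.
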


Now we state the main theorem to be proved.
\begingroup
\def\thetheorem{\ref{T1}}
\begin{theorem}
Let $M_1$ and $M_2$ be odd dimensional manifolds each with a ($H_i$-twisted) generalized coK\"ahler structure $(\Phi_{i},E_{\pm,i},G_i)$ $i=1,2.$
Furthermore, let $\mathcal J_1$ be defined as in (1.2) and let $\mathcal J_2=G\mathcal J_1$ were $G$ is the product metric. Then $(M_1\times M_2,\mathcal J_1,\mathcal J_2)$ is ($\tilde{H}$-twisted) generalized K\"ahler if an only if $(\Phi_i,E_{\pm,i},G_i)$ $i=1,2$ are ($H_i$-twisted) generalized coK\"ahler.
\end{theorem}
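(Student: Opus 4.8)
The plan is to reduce the statement to an application of the twisted Morimoto-type result, Theorem~4.2, applied twice: once to $\mathcal J_1$ and once to $\mathcal J_2$. Recall that by Definition~2.9, the pair $(\mathcal J_1,\mathcal J_2)$ is $\tilde H$-twisted generalized K\"ahler precisely when both $\mathcal J_1$ and $\mathcal J_2$ are $\tilde H$-twisted generalized complex structures and $G=-\mathcal J_1\mathcal J_2$ is a positive definite metric. Since we are given compatible generalized metrics $G_i$ on each factor, the product $G=(G_1,G_2)$ is automatically a positive definite metric on $T(M_1\times M_2)\oplus T^*(M_1\times M_2)$, and one checks directly from (1.2) that $G$ commutes with $\mathcal J_1$, so that $\mathcal J_2=G\mathcal J_1$ is a well-defined generalized almost complex structure with $G=-\mathcal J_1\mathcal J_2$. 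Thus the metric condition is free, and the content of the theorem is the equivalence: both $\mathcal J_1$ and $\mathcal J_2$ are integrable $\iff$ each $(\Phi_i,E_{\pm,i},G_i)$ is generalized coK\"ahler.

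For the forward direction, suppose $(\Phi_i,E_{\pm,i},G_i)$ is $H_i$-twisted generalized coK\"ahler for $i=1,2$. By Definition~3.7 this means each $(\Phi_i,E_{\pm,i})$ is a normal generalized contact structure (so $\Phi_i$ is strong and $[[E_{+,i},E_{-,i}]]_{H_i}=0$) and in addition each $G_i\Phi_i$ is strong. Applying Theorem~4.2 to the generalized almost contact structures $(\Phi_i,E_{\pm,i})$ gives that $\mathcal J_1$ is $\tilde H$-twisted generalized complex. Now observe, as in Remark~3.5, that $(M_i,G_i\Phi_i,E_{\mp,i},G_i)$ is again a generalized almost contact metric structure, and by Remark~3.8 its associated sections $E_{\mp,i}$ automatically satisfy $[[E_{\mp,i},E_{\pm,i}]]_{H_i}=0$. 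The key computational point is that $\mathcal J_2=G\mathcal J_1$ is exactly the generalized almost complex structure built from the data $(G_i\Phi_i,E_{\mp,i})$ via formula (1.2): one has to verify that conjugating (1.2) by $G=(G_1,G_2)$ and using $G_i(E_{\pm,i})=E_{\mp,i}$ (Lemma~3.2(i)) together with $G_i$ self-adjoint produces precisely formula (1.2) with $\Phi_i$ replaced by $G_i\Phi_i$ and $E_{\pm,i}$ replaced by $E_{\mp,i}$. Granting this, Theorem~4.2 applied to $(G_i\Phi_i,E_{\mp,i})$ shows $\mathcal J_2$ is $\tilde H$-twisted generalized complex, since each $G_i\Phi_i$ is strong. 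Hence $(M_1\times M_2,\mathcal J_1,\mathcal J_2)$ is $\tilde H$-twisted generalized K\"ahler.

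For the converse, suppose $(M_1\times M_2,\mathcal J_1,\mathcal J_2)$ is $\tilde H$-twisted generalized K\"ahler. Integrability of $\mathcal J_1$ together with Theorem~4.2 forces each $(\Phi_i,E_{\pm,i})$ to be a strong $H_i$-twisted generalized contact structure with $[[E_{+,i},E_{-,i}]]_{H_i}=0$, i.e.\ each $(\Phi_i,E_{\pm,i})$ is normal. Integrability of $\mathcal J_2=G\mathcal J_1$, via the same identification of $\mathcal J_2$ with the structure (1.2) attached to $(G_i\Phi_i,E_{\mp,i})$, together with Theorem~4.2 again, forces each $G_i\Phi_i$ to be strong (the bracket condition on $E_{\mp,i}$ being automatic). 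By Definition~3.7 this says exactly that each $(\Phi_i,E_{\pm,i},G_i)$ is $H_i$-twisted generalized coK\"ahler, completing the proof.

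The main obstacle is the bookkeeping lemma that $\mathcal J_2 = G\mathcal J_1$ coincides with the $\mathcal J$ of formula (1.2) built from $(G_i\Phi_i, E_{\mp,i})$; this is where Lemma~3.2(i)--(ii), the self-adjointness of the $G_i$, and the identities $\langle G_i(v),w\rangle=\langle v,G_i(w)\rangle$ all get used, and it must be checked that the cross terms $\langle E_{\pm,2},\,\cdot\,\rangle E_{\pm,1}$ in (1.2) transform correctly under $G$. Once this conjugation identity is established, everything else is a direct invocation of Theorem~4.2 and the definitions, so the real work is concentrated in that single algebraic verification on $T(M_1\times M_2)\oplus T^*(M_1\times M_2)$.
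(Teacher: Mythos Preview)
Your strategy is correct and is essentially the paper's own argument: the paper's converse direction is exactly what you do (apply Theorem~4.1 to both $\mathcal J_1$ and $\mathcal J_2$), while in the forward direction the paper chooses to write out the $\sqrt{-1}$-eigenbundle of $\mathcal J_2$ explicitly and verify Courant closure by hand --- which is nothing more than unpacking the proof of Theorem~4.1 for that particular structure. Your packaging is cleaner and more symmetric.

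One correction to the bookkeeping lemma you isolate. The claim that $G\mathcal J_1$ equals formula~(1.2) with $\Phi_i$ replaced by $G_i\Phi_i$ and $E_{\pm,i}$ replaced by $E_{\mp,i}$ on \emph{both} factors is not quite right. Computing $G\mathcal J_1$ directly using $G_i(E_{\pm,i})=E_{\mp,i}$ yields the paper's formula~(4.3), whose first component carries the cross terms $-2\langle E_{-,2},\,\cdot\,\rangle E_{+,1}-2\langle E_{+,2},\,\cdot\,\rangle E_{-,1}$; this is formula~(1.2) applied to $(G_1\Phi_1, E_{\mp,1})$ on $M_1$ and $(G_2\Phi_2, E_{\pm,2})$ on $M_2$, i.e.\ swap on \emph{one} factor only. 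This asymmetry is harmless for your argument, since the hypotheses and conclusion of Theorem~4.1 are invariant under interchanging $E_+$ and $E_-$ on either factor (strongness requires closure of both $L^+$ and $L^-$, and the bracket condition $[[E_+,E_-]]=0$ is symmetric). So once you verify \emph{some} identification of $\mathcal J_2$ with a Morimoto-type structure built from the $G_i\Phi_i$, your proof goes through exactly as written.
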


\addtocounter{theorem}{-1}
\endgroup
\begin{proof}
Assume $M_1$ and $M_2$ are odd dimensional manifolds each with ($H_i$-twisted) generalized coK\"ahler structure $(\Phi_i, E_{\pm, i}, G_i )$.
On $M_{1}\times M_{2}$ we have the product metric $G=(G_{1},G_{2})$. We know from Theorem 4.1 that $M_1 \times M_2$ admits a generalized complex structure
\begin{align}
\mathcal J_1 (X_1 +\alpha_1,X_2 +\alpha_2 ) = &( \Phi_1(X_1+\alpha_1) - 2\langle E_{+,2},X_2 +\alpha_2 \rangle E_{+,1} - 2\langle E_{-,2}, X_2 +\alpha_2 \rangle E_{-,1}, \cr
 &\Phi_2(X_2 +\alpha_2)  + 2\langle E_{+,1}, X_1 +\alpha_1 \rangle E_{+,2} + 2\langle E_{-,1}, X_1 +\alpha_1 \rangle E_{-,2} ).
\end{align}
Thus it is enough to produce a second generalized complex structure that commutes with $\mathcal J_1$.
Note that $\mathcal J_1$ and $G$ commute by direct computation. So define $\mathcal J_2 = G\mathcal J_1$.  Then $\mathcal J_2^2 = - {\rm Id}$ and $\mathcal J_1 \mathcal J_2 = \mathcal J_2 \mathcal J_1$.

One can now compute an explicit formula for $\mathcal J_2$:
\begin{align}
\mathcal J_2 (X_1 +\alpha_1,X_2 +\alpha_2 ) = &(G \Phi_1(X_1+\alpha_1) - 2\langle E_{-,2},X_2 +\alpha_2 \rangle E_{+,1} - 2\langle E_{+,2}, X_2 +\alpha_2 \rangle E_{-,1},\cr
 & G\Phi_2(X_2 +\alpha_2)  + 2\langle E_{-,1}, X_1 +\alpha_1 \rangle E_{+,2} + 2\langle E_{+,1}, X_1 +\alpha_1 \rangle E_{-,2} ).
\end{align}

A direct calculation shows that $\mathcal J_2^* = -\mathcal J_2$.  All that remains to be shown is that the $\sqrt{-1}$ eigenspaces of $\mathcal J_2$ are closed under the ($\tilde{H}$-twisted) Courant bracket.

From the formula for $\mathcal J_2$ we see that the generators of its $\sqrt{-1}$ eigenspace are given by
\begin{align}
&(E^{(1,0)}_{G_{1}\Phi_1},0) \cr
&(0, E^{(1,0)}_{G_{2}\Phi_2}) \cr
&(E_{+,1}, -\sqrt{-1} E_{+,2}) \cr
&(E_{-,1}, -\sqrt{-1} E_{-,2}).
\end{align}

So it is enough to verify that these generators are closed under the ($\tilde{H}$-twisted) Courant bracket. Since $G_{1}\Phi_{1}$ is strong, we have

$$[[ (E^{(1,0)}_{G_{1}\Phi_1},0),(E^{(1,0)}_{G_{1}\Phi_1},0)]]_{\tilde{H}} = ([[E^{(1,0)}_{G_{1}\Phi_1}, E^{(1,0)}_{G_{1}\Phi_1}]]_{H_1},0) \subset (E^{(1,0)}_{G_{1}\Phi_1},0)$$ by Lemma 4.2 and (4.1).

Similarly,

$$[[(E^{(1,0)}_{G_{1}\Phi_1},0), (E_{\pm,1}, -\sqrt{-1}E_{\pm,2})]]_{\tilde{H}} = ([[E^{(1,0)}_{G_{1}\Phi_1}, E_{\pm,1}]]_{H_1},0) \subset (E^{(1,0)}_{G_{1}\Phi_1},0)$$


and
$$[[(0,E^{(1,0)}_{G_{2}\Phi_2}), (0,E^{(1,0)}_{G_{2}\Phi_2})]]_{\tilde{H}} = (0, [[E^{(1,0)}_{G_{2}\Phi_2}, E^{(1,0)}_{G_{2}\Phi_2}]]_{H_2}) \subset (0,E^{(1,0)}_{G_{2}\Phi_2}).$$

Furthermore,
$$[[(0,E^{(1,0)}_{G_{2}\Phi_2}), (E_{\pm,1}, -\sqrt{-1}E_{\pm,2})]]_{\tilde{H}} = (0, [[E^{(1,0)}_{G_{2}\Phi_2}, -\sqrt{-1}E_{\pm,2}]]_{H_2}) \subset (0,E^{(1,0)}_{G_{2}\Phi_2}).$$

Since $[[E_{\pm, i},E_{\mp ,i}]]_{H_i} = 0$, it is straightforward to compute that $$[[ (E_{+,1} -\sqrt{-1}E_{+,2}) , (E_{-,1}, -\sqrt{-1}E_{-,2})]]_{\tilde{H}} = (0,0)$$ and so the $\sqrt{-1}$ eigenbundle of $\mathcal J_{2}$ is Courant closed and thus $(M_{1}\times M_{2},\mathcal J_{1}, \mathcal J_{2},G)$ is generalized K\"ahler.\\

Conversely, assume $M_1 \times M_2$ is a ($\tilde{H}$-twisted) generalized K\"ahler manifold with ($\tilde{H}$-twisted) generalized complex structures $\mathcal J_1$ and $\mathcal J_2$ as given above. We must show $(\Phi_i, E_{\pm,i}, G_i )$ are $(H_{i}$-twisted) generalized coK\"ahler for $i=1,2.$ By applying Theorem 4.1 to $(M_1 \times M_2, \mathcal J_1)$ we get immediately
that $( \Phi_i, E_{\pm,i}, G_i )$ are normal for $i=1,2.$ Since $\mathcal{J}_2=G\mathcal{J}_1$ is induced from $G_{i}\Phi_i$, we can apply Theorem 4.1 again to $(M_1 \times M_2,  \mathcal J_2) $ and this shows $G_{i}\Phi_i$ are normal. Therefore, $(\Phi_i, E_{\pm,i},G_i)$ is a ($\tilde{H_i}$-twisted) generalized coK\"ahler structure for $i=1,2.$

\end{proof}

Here is another proof of Proposition 3.5 as an application of our main theorem.
\begin{corollary} Any coK\"ahler manifold is generalized coK\"ahler.
\end{corollary}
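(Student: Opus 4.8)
The plan is to derive the corollary from the classical Capursi--Goldberg theorem (Theorem 1.1), the standard passage from a K\"ahler structure to a generalized K\"ahler structure (Example 2.10), and the ``only if'' half of the main theorem (Theorem \ref{T1}). Let $(M,\phi,\xi,\eta,g)$ be a coK\"ahler manifold and let $S^1$ carry the trivial coK\"ahler structure of Example 2.2. First I would attach to $M$, exactly as in the proof of Proposition 3.5, the generalized contact metric structure $(\Phi_\phi, E_+=\xi, E_-=\eta, G_M)$, and to $S^1$ the generalized contact metric structure $(\Phi=0, E_\pm, G_{S^1})$ determined by its trivial coK\"ahler structure as in Example 2.13; the compatibility condition (3.1) in each case is the routine verification already carried out in Proposition 3.5. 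Consequently the two generalized almost complex structures $\mathcal J_1$ of formula (1.2) and $\mathcal J_2=G\mathcal J_1$, with $G=(G_M,G_{S^1})$ the product metric, are defined on $M\times S^1$, and the hypotheses of Theorem \ref{T1} are met.

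The heart of the argument is to identify the pair $(\mathcal J_1,\mathcal J_2)$ concretely. Substituting the classical data $\Phi_i=\Phi_{\phi_i}$, $E_{+,i}=\xi_i$, $E_{-,i}=\eta_i$ into (1.2) and simplifying with $\langle\xi_i,X+\alpha\rangle=\tfrac12\alpha(\xi_i)$ and $\langle\eta_i,X+\alpha\rangle=\tfrac12\eta_i(X)$, one recovers Morimoto's almost complex structure $J$ of (1.1) on $M\times S^1$, with $\mathcal J_1=\mathcal J_J$ (up to the overall sign distinguishing $\mathcal J_J$ from $\mathcal J_{-J}$); this is precisely the sense in which (1.2) generalizes (1.1) in \cite{[GT]}. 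Moreover, the product of two generalized metrics of block form $\left(\begin{smallmatrix}0&g_i^{-1}\\ g_i&0\end{smallmatrix}\right)$ is again of that form, with Riemannian part $g_1+g_2$; hence $\mathcal J_2=G\mathcal J_1=\mathcal J_\omega$, where $\omega(\cdot,\cdot)=g(\cdot,J\cdot)$ is the fundamental two-form of the product almost Hermitian structure. Thus $(M\times S^1,\mathcal J_1,\mathcal J_2,G)$ is exactly the triple that Example 2.10 attaches to $(M\times S^1,J,\omega,g+dt\otimes dt)$.

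Now Theorem 1.1 does the rest: since $M$ and $S^1$ are both coK\"ahler, $(M\times S^1,J,g+dt\otimes dt)$ is K\"ahler, so by Example 2.10 the pair $(\mathcal J_1,\mathcal J_2)$ is a generalized K\"ahler structure on $M\times S^1$. Applying the ``only if'' direction of Theorem \ref{T1} to this generalized K\"ahler product then shows that each factor structure is generalized coK\"ahler; in particular $(\Phi_\phi, E_+=\xi, E_-=\eta, G_M)$ is a generalized coK\"ahler structure on $M$, as desired.

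I expect the only genuine computation to be the identification in the middle paragraph: checking that (1.2) specializes to Morimoto's (1.1) on classical input (already implicit in \cite{[GT]}) and that compatibility of the block-type generalized metric is inherited by products. Everything else is a direct invocation of Theorems 1.1 and \ref{T1} and of Example 2.10. One could equally well use $\mathbb{R}$, or any coK\"ahler manifold, in place of $S^1$; the circle is merely the most economical choice of second factor.
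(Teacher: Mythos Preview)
Your proposal is correct and follows essentially the same route as the paper's own proof, which uses $\mathbb{R}$ in place of $S^1$ but is otherwise identical: classical coK\"ahler on each factor $\Rightarrow$ the product is K\"ahler (Example~2.3/Theorem~1.1) $\Rightarrow$ the product is generalized K\"ahler (Example~2.10) $\Rightarrow$ each factor is generalized coK\"ahler by the ``only if'' direction of Theorem~\ref{T1}. Your middle paragraph, identifying the pair $(\mathcal J_1,\mathcal J_2)$ arising from (1.2) with the pair of Example~2.10, makes explicit a compatibility that the paper's three-line proof leaves to the reader.
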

\begin{proof}
Let $(\phi, \xi, \eta, g)$ be a coK\"ahler structure on $M$ and let $\mathbb{R}$ have its trivial coK\"ahler structure.
Now $M\times \mathbb{R}$ with its product metric is K\"ahler (see Example 2.3). Therefore it is generalized K\"ahler. By applying Theorem 1.3, this gives $(M,\phi, \xi, \eta, g)$ is generalized coK\"ahler.
\end{proof}

\section{Some Examples of Generalized CoK\"ahler Structures}
We have already seen that every classical coK\"ahler structure gives a generalized coK\"ahler structure.
In this section, we provide many more examples of generalized coK\"ahler structures on manifolds. The examples we construct arise from two
general constructions:   $i)$ deformations of generalized K\"ahler structures and $ii)$ products of manifolds. \\
\indent First, we show that the $B$-field transformation of a generalized coK\"ahler structure is again a generalized coK\"aher structure.
\begin{example}

\indent Consider the ($H$-twisted) generalized coK\"ahler structure $(\Phi_{\phi}, E_{\pm}, G)$ and let $B$ be a closed two form, $\Omega$ the fundamental two form as defined in (2.1), and $\pi$ the bivector field from Example 2.14.  Perform
$B$-field transformations obtaining
$$\Phi^{B}_{\phi} = \left ( \begin{array}{cc}  \phi & 0 \\ B\phi + \phi^{*}B & -\phi^{*} \end{array} \right ),  \Phi^{B}_{\Omega} = \left ( \begin{array}{cc}  -\pi^{\sharp}B & \pi^{\sharp} \\ \Omega^{\flat}-B\pi^{\sharp}B & B\pi^{\sharp} \end{array} \right )$$
and the generalized metric given by
$$G^{B} = \left ( \begin{array}{cc}  -g^{-1}B & g^{-1} \\ g-Bg^{-1}B & Bg^{-1} \end{array} \right ). $$
Observe that $[[e^{B}\xi,e^{B}\eta]]_{H}=[[\xi+B\xi,\eta]]_{H}=0$.
Furthermore it can be easily calculated that $G^{B}\Phi^{B}_{\phi}=\Phi^{B}_{\Omega}.$
Since the ($H$-twisted) Courant bracket is invariant under $B$-field transformations, $(\Phi^{B}_{\phi},e^{B}E_{\pm},G^{B})$ is again
($H$-twisted) generalized coK\"ahler.
\end{example}

\indent Recall from Example 2.2 we considered the product of a K\"ahler manifold $(N,J,g)$ and $\mathbb{R}$ (or $S^{1}.$) Using the trivial
coK\"ahler structure on $\mathbb{R}$, one can construct on $N\times \mathbb{R}$ a coK\"ahler structure. This
product construction can be extended to the generalized context, providing a source of many examples.
\begin{proposition}
Let $(M,\mathcal J_{1}, \mathcal J_{2}, G_M)$ be a ($H_{M}$-twisted) generalized K\"ahler manifold and let $(N,\Phi_{1}, E_{N,\pm}, G_N) $
be a ($H_{N}$-twisted) generalized coK\"ahler manifold. Then $M\times N$ admits a ($H_{M\times N}$-twisted) generalized coK\"ahler structure.
\end{proposition}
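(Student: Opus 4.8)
The plan is to build the generalized coK\"ahler structure on the odd-dimensional product $P:=M\times N$ by block-diagonalizing the data of the two factors. Using the splitting $TP\oplus T^{*}P=(TM\oplus T^{*}M)\oplus(TN\oplus T^{*}N)$ I would set
\[
\Phi:=\mathcal J_{1}\oplus\Phi_{1},\qquad E_{\pm}:=(0,E_{N,\pm}),\qquad G:=G_{M}\oplus G_{N},
\]
with $H_{M\times N}:=H_{M}+H_{N}$, and claim that $(P,\Phi,E_{\pm},G)$ is an $(H_{M\times N}$-twisted) generalized coK\"ahler structure.

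The first step is the purely algebraic check that $(\Phi,E_{\pm},G)$ is a generalized almost contact metric structure, i.e.\ that (2.3)--(2.5) and the compatibility condition (3.1) hold, together with $G^{*}=G$, $G^{2}={\rm Id}$ and $G\Phi=\Phi G$. Conditions (2.3)--(2.5) are immediate from the block form of $\Phi$, from $\mathcal J_{1}^{2}=-{\rm Id}$, from the corresponding identities for $(\Phi_{1},E_{N,\pm})$ on $N$, and from the fact that $(0,a_{N})\otimes(0,b_{N})=0\oplus(a_{N}\otimes b_{N})$ for sections $a_{N},b_{N}$ of $TN\oplus T^{*}N$. For (3.1) the only extra ingredient is the generalized K\"ahler identity $-\mathcal J_{1}G_{M}\mathcal J_{1}=G_{M}$, which follows from $G_{M}=-\mathcal J_{1}\mathcal J_{2}$ and the commuting of $\mathcal J_{1},\mathcal J_{2}$; combined with (3.1) for $(\Phi_{1},E_{N,\pm},G_{N})$ this gives $-\Phi G\Phi=G-E_{+}\otimes E_{+}-E_{-}\otimes E_{-}$. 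Finally $G\Phi=\Phi G$ because $G_{M}$ commutes with $\mathcal J_{1}$ and, by Lemma~3.2(ii), $G_{N}$ commutes with $\Phi_{1}$.

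The substance of the proof is involutivity. Since $\langle E_{\pm},(u,v)\rangle$ only involves the $N$-component $v$, computing the eigenbundle of $\Phi$ directly from the definition gives $E^{(1,0)}_{\Phi}=L_{M}\oplus E^{(1,0)}_{\Phi_{1}}$, where $L_{M}$ is the $\sqrt{-1}$-eigenbundle of $\mathcal J_{1}$, and consequently $L^{\pm}_{\Phi}=L_{M}\oplus L^{\pm}_{\Phi_{1}}$. Because $M$ is generalized K\"ahler, $\mathcal J_{1}$ is an $(H_{M}$-twisted) generalized complex structure, so $L_{M}$ is Courant closed; because $N$ is generalized coK\"ahler, $\Phi_{1}$ is strong, so $L^{\pm}_{\Phi_{1}}$ are Courant closed; by the product bracket formula (4.1), brackets of sections of $L^{\pm}_{\Phi}$ split into their two components, whence $L^{\pm}_{\Phi}$ are $(\tilde{H}$-twisted) Courant closed and $\Phi$ is strong. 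Moreover $[[E_{+},E_{-}]]_{\tilde{H}}=(0,[[E_{N,+},E_{N,-}]]_{H_{N}})=0$ since $N$ is normal, so $(P,\Phi,E_{\pm},G)$ is a normal generalized contact metric structure. It then remains to check that $G\Phi$ is strong; but $G\Phi=\mathcal J_{2}\oplus(G_{N}\Phi_{1})$ with associated sections $GE_{\pm}=(0,E_{N,\mp})$, so $L^{\pm}_{G\Phi}=L_{\mathcal J_{2}}\oplus L^{\pm}_{G_{N}\Phi_{1}}$, and the same eigenbundle-plus-(4.1) argument applies: $\mathcal J_{2}$ is a generalized complex structure (the second one in the generalized K\"ahler pair on $M$) and $G_{N}\Phi_{1}$ is strong (because $N$ is generalized coK\"ahler), so $L^{\pm}_{G\Phi}$ is Courant closed. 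Hence $G\Phi$ is strong, and $(P,\Phi,E_{\pm},G)$ is generalized coK\"ahler.

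The only point where I expect real friction is the eigenbundle bookkeeping in the third paragraph: one must verify carefully that the $0$-eigenspace of $\Phi_{1}$ plays no role, that $E^{(1,0)}_{\Phi}$ is genuinely the stated direct sum (in particular maximal isotropic of the correct rank), and that $L^{\pm}_{\Phi}$ and $L^{\pm}_{G\Phi}$ decompose compatibly with (4.1). An alternative route that packages these facts differently is to apply the converse direction of Theorem~\ref{T1} to the pair $(M\times N,\mathbb{R})$, where $\mathbb{R}$ carries the trivial generalized coK\"ahler structure: one notes $(M\times N)\times\mathbb{R}\cong M\times(N\times\mathbb{R})$, uses Theorem~\ref{T1} to see $N\times\mathbb{R}$ is generalized K\"ahler, checks that a product of generalized K\"ahler manifolds is generalized K\"ahler, verifies that the induced generalized complex structure on the product is the one coming from $(\Phi,E_{\pm})$ above together with the trivial structure on $\mathbb{R}$, and reads off the conclusion; this costs about the same.
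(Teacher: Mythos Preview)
Your proof is correct and follows essentially the same approach as the paper: you build the product structure $\Phi=\mathcal J_{1}\oplus\Phi_{1}$, $E_{\pm}=(0,E_{N,\pm})$, $G=G_{M}\oplus G_{N}$, identify $L^{\pm}_{\Phi}=L_{M}\oplus L^{\pm}_{\Phi_{1}}$, and use the factorwise involutivity together with the product bracket formula to conclude. Your argument is in fact more explicit than the paper's (which dispatches the algebraic checks and the strength of $G\Phi$ in a sentence each), and your observation that $G\Phi=\mathcal J_{2}\oplus G_{N}\Phi_{1}$ makes transparent exactly why the second generalized complex structure on $M$ is needed.
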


\begin{proof}
Recall that $T(M\times N) \oplus T^*(M\times N) \approx (TM \oplus T^{*}M )\oplus (TN \oplus T^*N)$
Define the endomorphism $\Phi$ on $(T^*M \oplus TM )\oplus (TN \oplus T^*N )$ by$$\Phi = (\mathcal J, \Phi_1).$$ Define $E_+ = (0, E_{N,+})$, and $ E_- =(0, E_{N,-})$.  Let $G= G_M\times G_N$ be the product metric. It is easy to
verify that $(\Phi, E_{\pm},G)$ is a generalized almost contact metric structure on $M\times N.$ Let $L$ denote the $\sqrt{-1}$ eigenbundle of $\mathcal J$. Then $L^\pm_\Phi  =  (L, E^{(1,0)}_{\Phi_1} )\oplus L_{( 0, E_\pm)}$ is clearly closed under the ($H_{M\times N}$ twisted) Courant bracket which implies that $\Phi $ is strong. Also, observe that $[[E_{+},E_{-}]]_{H_{M\times N}}=0.$  Hence, $(\Phi, E_{\pm}, G)$ is a normal generalized contact structure.
Similarly $L^\pm_{G\Phi}$ is easily seen to be closed under the ($H_{M\times N}$ twisted)Courant bracket so $G\Phi$ is strong.
Therefore, $(M \times N, \Phi ,E_{\pm}, G)$ defines a ($H_{M\times N}$twisted) generalized coK\"ahler structure.
\end{proof}
\indent In \cite{[GOT]}, Goto proves a stability theorem for generalized K\"ahler structures on a manifold $M$ under the hypothesis that there exists an analytic family of generalized complex structures on the manifold. Further, Goto shows that the space of obstructions to deformations of generalized complex structures vanishes in the case of a compact K\"ahler manifold with a holomorphic Poisson structure $\beta$.  We can use these theorems in combination with the above product theorem to construct examples of nontrivial
generalized coK\"ahler manifolds. By \textit{nontrivial generalized coK\"ahler}, we mean that the generalized coK\"ahler structure
does not come from a classical coK\"ahler structure or a B-field transform of a classical coK\"ahler structure.
We first state Goto's stability theorem:
\begin{theorem}\cite{[GOT]}
Let $M$ be a compact K\"ahler manifold of dimension $n.$ If we have an action of an $l$ dimensional complex commutative Lie group $G$ with a
non-trivial $2$-vector $\beta$, then we have a family of deformations of nontrivial generalized K\"ahler structures on $M.$
\end{theorem}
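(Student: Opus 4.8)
The plan is to realize the asserted family as deformations, driven by a holomorphic Poisson bivector, of the standard generalized K\"ahler structure carried by the K\"ahler metric, and then to invoke Goto's stability principle, whose only nontrivial input is a Hodge-theoretic vanishing on a compact K\"ahler manifold. First I would build the Poisson bivector: the action of the $l$-dimensional complex commutative Lie group $G$ supplies commuting holomorphic vector fields $v_1,\dots,v_l\in H^0(M,T^{1,0}M)$, and for constants $c_{ab}$ the bivector $\beta=\sum_{a<b}c_{ab}\,v_a\wedge v_b$ is holomorphic, $\bar\partial\beta=0$, with vanishing Schouten bracket $[\beta,\beta]=0$ (the $v_a$ commute); $\beta$ is non-trivial once some coefficient is non-zero on the independent span of the corresponding fields, which is the content of the hypothesis. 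Starting from the standard generalized K\"ahler pair $(\mathcal J_J,\mathcal J_\omega)$ attached to the K\"ahler data $(J,\omega,g)$ as in Examples 2.6, 2.7 and 2.10, I would deform $\mathcal J_J$ along $\beta$: through the Lie algebroid $L_{\mathcal J_J}$ the element $\beta$ generates an analytic family $\mathcal J_1^{t}$ of generalized almost complex structures, and since $\bar\partial\beta=0$ and $[\beta,\beta]=0$ the deformation parameter solves the associated Maurer--Cartan equation, so each $\mathcal J_1^{t}$ is a genuine generalized complex structure.

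The heart of the argument is to carry $\mathcal J_\omega$ along and keep the pair generalized K\"ahler. Here I would follow Goto: a generalized K\"ahler structure with prescribed first component $\mathcal J_1$ is equivalent to a suitable reduction of the structure group of $TM\oplus T^*M$, and deforming the reduction compatibly with $\mathcal J_1^{t}$ is governed by a deformation complex whose obstruction space is a Dolbeault-type cohomology group of the compact complex manifold $(M,J)$ equipped with the K\"ahler metric. The single place where compactness and the K\"ahler hypothesis are used is that the $\partial\bar\partial$-lemma and Hodge theory force this obstruction space to vanish, exactly in the spirit of the Kodaira--Spencer stability of K\"ahler metrics; granting that vanishing, the partner $\mathcal J_2^{t}$ exists for $|t|$ small by an implicit-function-theorem argument in suitable Sobolev completions and depends analytically on $t$, yielding the analytic family $(\mathcal J_1^{t},\mathcal J_2^{t})$ of generalized K\"ahler structures.

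It remains to check non-triviality. For generic small $t\neq0$ the $\beta$-deformation changes $\mathcal J_1^{t}$ on the open set where $\beta\neq0$ in a manner that no $B$-field transform or diffeomorphism of the K\"ahler structure can reproduce --- concretely, the projection of the $+\sqrt{-1}$-eigenbundle of $\mathcal J_1^{t}$ to $TM$, equivalently the type of $\mathcal J_1^{t}$, is no longer that of a classical complex structure --- so the resulting generalized K\"ahler structures do not come from an ordinary K\"ahler structure or a $B$-field transform of one. The main obstacle is the vanishing invoked in the second paragraph: writing down the correct deformation complex for the generalized K\"ahler pair and proving that its relevant cohomology vanishes on a compact K\"ahler manifold is the technical heart of \cite{[GOT]}; once this is granted, the construction of the analytic family and its non-triviality are essentially formal, and for the full details I would appeal to \cite{[GOT]}.
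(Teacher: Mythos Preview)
The paper does not give its own proof of this statement: it is quoted verbatim as Goto's stability theorem with the citation \cite{[GOT]} and then used as a black box to feed Proposition~5.2 and the subsequent examples. So there is nothing in the paper to compare your argument against.

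That said, your sketch is a faithful outline of what Goto actually does in \cite{[GOT]}: manufacture a holomorphic Poisson bivector $\beta$ from the commuting holomorphic vector fields supplied by the abelian group action, use $\beta$ to produce an analytic family $\mathcal J_1^{t}$ of generalized complex structures deforming $\mathcal J_J$, and then invoke the stability machinery (unobstructedness via Hodge theory and the $\partial\bar\partial$-lemma on a compact K\"ahler manifold) to carry along a compatible $\mathcal J_2^{t}$; non-triviality comes from the type jump on the locus where $\beta\neq 0$. The one place where you are correctly candid is that the vanishing of the obstruction space is the technical core, and you appropriately defer that to \cite{[GOT]} rather than trying to reproduce it. For the purposes of this paper, however, all you actually need is the citation; the authors do not claim or require any details of Goto's argument beyond the conclusion itself.
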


\indent  We combine this theorem with Proposition 5.2 to construct nontrivial generalized coK\"ahler
structures.
\begin{example}
Let $M$ be any compact toric K\"ahler manifold so that the hypothesis of Goto's theorem is satisfied.  Then $M$ then admits a nontrivial
generalized K\"ahler structure. Equip $S^1$ with the trivial generalized coK\"ahler structure given by $$\Phi=0,\hspace{.2cm} E_{+}=\del_t,\hspace{.2cm}E_{-}=dt,\hspace{.2cm}
G = \left ( \begin{array}{cc}  0 & g^{-1}_{S^1} \\ g_{S^1} & 0 \end{array} \right )$$
where $t$ is the coordinate on $S^{1}$ and $g_{S^{1}}=dt\otimes dt.$
Form the product $M\times S^{1}$ which by Proposition 5.2 is generalized coK\"ahler. The nontriviality of the generalized K\"ahler structure on $M$ implies the nontriviality of the generalized coK\"ahler structure on $M\times S^{1}$.
\end{example}

\indent It would be interesting to find examples of strictly almost (co)K\"ahler manifolds that admit a
generalized (co)K\"ahler structure but we were unable to do so except in the twisted case. Gualtieri in \cite{[G1]} showed that the Hopf surface $S^{3}\times S^{1}$, which is nonKahler, does not admit any generalized K\"ahler structure yet it does admit a twisted generalized K\"ahler structure. For additional examples, see for instance (\cite{[ApGu]},\cite{[G3]},\cite{[GOT]}). 

\indent We will construct new examples of almost (co)K\"ahler non-(co)K\"ahler manifolds which admit $H$-twisted generalized (co)K\"ahler structures.
(A remark on terminology:\\ $(M,\omega,J,g)$ is strictly almost K\"ahler if $M$ does not admit any integrable complex structure and $(M,\omega,J,g)$ is almost K\"ahler nonK\"ahler if that particular $J$ is not integrable.) Our examples begin with a construction done by Fino and Tomassini \cite{[FT]} in which they explicitly construct a six-dimensional solvmanifold which admits an $H$-twisted generalized K\"ahler structure. This manifold, denoted by $M^6$,
arises as the total space of a $\mathbb{T}^2$-bundle over the Inoue surface. From the construction,
they are able to compute the first Betti number and show $b_{1}(M^{6})=1.$ Therefore, the manifold $M^{6}$
is a strictly almost K\"ahler manifold which admits an $H$-twisted generalized K\"ahler structure.

\begin{example}(\emph{Twisted Generalized CoK\"ahler of arbitrary odd dimension $ > $ 7})\\
Form the product manifold $M^{7}=M^{6}\times S^{1}$ where $S^{1}$ has the trivial ($H=0$ twisted) generalized coK\"ahler structure and
we get that $M^{7}$ admits an $\tilde{H}$-twisted generalized coK\"ahler structure by Proposition 5.2. Since $b_{1}(M^{6})=1$ \cite{[FT]}, it follows that
$b_{1}(M^{7})=2.$ By Theorem 2.4, this implies $M^{7}$ is strictly almost coK\"ahler. The manifold $M^{6}$ constructed by Fino and Tomassini actually
works in arbitrary even dimension \cite{[FT]} and so denote this manifold by $M^{2n}.$ Now, form the product with $S^{1}$ and define $M^{2n+1}:=M^{2n}\times S^{1}$. By Proposition 5.2, $M^{2n+1}$ admits an $\tilde{H}$-twisted generalized coK\"ahler structure and since $b_{1}(M^{2n+1})=2$, it
is strictly almost coK\"ahler.
\end{example}

The approach we use in the next example follows closely an argument given by Watson \cite{[W]} in
which he constructs higher dimensional almost K\"ahler non-K\"ahler manifolds starting with Thurston's torus
bundle $W^{4}$ over $T^{2}$ which has $b_{1}(W^{4})=3.$

\begin{example}(\emph{Twisted Generalized K\"ahler})\\
Proceeding with the manifold $M^{7}$ in Example 5.5, form the product $M^{14}:=M^{7}\times M^{7}.$ By Theorem 1.3, $M^{14}$ admits an $\widetilde{H_2}$-twisted generalized K\"ahler structure, where $\widetilde{H_{2}}=\widetilde{H_{1}}+\widetilde{H_1}$. Moreover,
$M^{14}$ with this product structure is an almost K\"ahler non-K\"ahler manifold since if it was a K\"ahler manifold then each factor $M^{7}$ would
be coK\"ahler by Theorem 1.1, which is impossible since $b_{1}(M^{7})=2.$ We can continue this process now.
Form the product $M^{15}:=M^{14}\times S^{1}$. This manifold is almost coK\"ahler non-coK\"ahler since if it
was coK\"ahler then $M^{14}$ would be K\"ahler. We can now apply Theorem 1.1
to $M^{22}:= M^{15}\times M^{7}$ concluding that $M^{22}$ is an almost K\"ahler non-K\"ahler manifold that admits an $\widetilde{H}_{3}$-twisted generalized K\"ahler structure. If $M^{22}$ were K\"ahler, then both $M^{15}$ and $M^{7}$ would be
coK\"ahler, which cannot happen. At each iteration, one takes the product with
an $S^{1}$ followed by a product with $M^{7}$. Continuing in this manner, we get $8n+6$-dimensional almost K\"ahler non-K\"ahler manifolds which are $H$-twisted generalized K\"ahler and with $b_{1}=3n+1$, $n=0,1,2,3,...$ Note that
for $n=2k$, the first Betti number is $b_{1}=6k+1$ and so these $16k+6$ dimensional manifolds
are strictly almost K\"ahler manifolds which admit twisted generalized K\"ahler structures. In this procedure we also generate $8n+7$-dimensional almost coK\"ahler non-coK\"ahler manifolds which admit twisted generalized coK\"ahler structures. Moreover, for $n=2m$ the $16m+7$ dimensional manifolds are strictly
almost coK\"ahler manifolds which are twisted generalized coK\"ahler since $b_{1}=6m+2.$ By Theorem 2.4, these manifolds
are strictly almost coK\"ahler.
\end{example}

\begin{remark}
All of these examples are non-diffeomorphic to the examples given by Fino and Tomassini in \cite{[FT]} since our examples
have first Betti number which grows linearly with dimension whereas their examples have $b_1=1$ in arbitrary even dimension.
\end{remark}

An important feature of generalized K\"ahler geometry is its relationship with bi-Hermitian geometry.
It was shown by Gualtieri in \cite{[G1]} that having a generalized K\"ahler structure on a manifold is equivalent to having
a bi-Hermitian structure on the manifold. Therefore, Example 5.6 gives examples of manifolds which admit bi-Hermitian
structures as well.\\

\section*{acknowledgments}
The first author was supported by a James A. Michener Faculty Fellowship at Swarthmore College.
The authors would like to thank Anna Fino for a useful email. The authors would also like to thank the referee for thoughtful suggestions that improved the exposition in the paper.

\end{document}